\theoremstyle{plain}
\numberwithin{equation}{section}
\newtheorem{theorem}{Theorem}[section]
\newtheorem{lemma}[theorem]{Lemma}
\newtheorem{proposition}[theorem]{Proposition}
\theoremstyle{remark}
\def\ve{\varepsilon}
\def\R{\mathbf R}
\def\N{\mathbb N}
\def\ov{\overline}
\def\0{\textbf 0}
\renewcommand{\S}{\mathbf S}
\numberwithin{equation}{section}
\def\@cite#1#2{[\textbf{#1}\if@tempswa, #2\fi]}
\title[Moving sphere approach to a general weighted integral equation]{Moving sphere approach to a general weighted integral equation}
\def\cfac#1{\ifmmode\setbox7\hbox{$\accent"5E#1$}\else\setbox7\hbox{\accent"5E#1}\penalty 10000\relax\fi\raise 1\ht7\hbox{\lower1.05ex\hbox to 1\wd7{\hss\accent"13\hss}}\penalty 10000\hskip-1\wd7\penalty 10000\box7 }
\author[Q. N.T. L\^e]{Qu\`ynh N.T. L\^e}
\address[Qu\`ynh. N.T. L\^e]{
	University of Economics \& Business, Vietnam National University, Hanoi, Vietnam}
\email{\href{mailto: Q. N. T. L\^e <ngocquynhlt@vnu.edu.vn>}{ngocquynhlt@vnu.edu.vn}
}
\author[T.-T. Nguyen]{Tien-Tai Nguyen}
\address[Tien-Tai Nguyen]{
	University of Science, Vietnam National University, Hanoi, Vietnam\\
	ORCID iD: 0000-0002-5865-8134}
\email{\href{mailto: T.-T. Nguyen <nttai.hus@vnu.edu.vn>}{nttai.hus@vnu.edu.vn}
}
\begin{document}

\begin{abstract}
Let $p$ be positive and $n \geq 3$ be an integer.  Let $f(\cdot,\cdot): \R_+\times \R_+\to \R_+$ be a  continuous function. In this paper, we are concerned with positive solutions to the following integral equation
\[
u(x)= \int_{\R^n} |x-y|^p f(|y|,u(y)) dy \quad\text{in }\R^n\setminus\{\0\}.
\]
By imposing some suitable conditions on $f$, we obtain the radially symmetry property of positive solutions to the above equation by using the method of moving spheres in integral form.
\end{abstract}
	
\date{\bf \today \, at \currenttime}
	
\subjclass[2020]{26D15, 35B06, 35C15,  35J30, 58J70}
	
\keywords{GJMS operator, integral equation, radial symmetry, method of moving spheres.}	\maketitle
 \tableofcontents
\section{Introduction}

Let $n\geq 3$ be an integer. Let $k$ be a smooth function in $\R^n$, the equation
\begin{equation}\label{Eq2ndOrder}
-\Delta u=k(x)u^{\frac{n+2}{n-2}} \quad\text{in }\R^n,
\end{equation}
is closely related to the famous Yamabe problem as well as the prescribing scalar curvature problem on the unit sphere $\S^n$. We refer to \cite{GNN79, GS81, BVV91} for celebrated results on the existence of (non)radial solutions and their properties to Eq. \eqref{Eq2ndOrder}. In \cite{JLX08}, Jin, Li and Xu study the genreal second-order elliptic equation with continuous function $f(\cdot,\cdot): \R_+\times \R_+\to \R_+$ 
\begin{equation}\label{EqDelta}
-\Delta u=f(|x|,u) \quad\text{and}\quad u>0 \text{ in }\R^n\setminus\{\0\}.
\end{equation}
Imposing the condition:
\begin{equation}\label{Condition-JLX}
\begin{split}
&\text{ for any } x\neq \0, 0<\lambda<|x|, |z|>\lambda \text{ and } a\leq b, \text{ there holds } \\
& \qquad f(|z|,a)> \big( \frac{\lambda}{|z|}\big)^{n+2} f\Big( \Big|x+\frac{\lambda^2 z}{|z|^2}\Big|, \big( \frac{|z|}{\lambda} \big)^{n-2} b\Big),
\end{split}
\end{equation}
the authors proved that any positive solution $u\in C^2(\R^n\setminus\{\0\})$ to \eqref{EqDelta} must be radially symmetric and monotone decreasing with respect to the origin.

In this paper, let us show the geometric extension to Eq. \eqref{EqDelta} with Laplace operator of arbitrary order $s\in \R_+ \setminus \{n/2\}$. Following \cite[Def. 2.2]{Sil07} (see also \cite{CFY15, CAL15}), we introduce the nonlocal operator, that is the fractional Laplacian $(-\Delta)^s$ in $\R^n$ with $s\in (-n/2,1]$, via the Fourier transform,  
\begin{equation}
\widehat{(-\Delta)^s f}(\xi)= |\xi|^{2s} \widehat f(\xi),
\end{equation}
for any function $f$ in the Schwartz space.  In the particular case $s \in (0,1)$, the operator $(-\Delta)^s$ can be presented via the singular integral, i.e. 
\[
\begin{split}
(-\Delta)^s f(x) &:= C_{n,s} P.V. \int_{\R^n} \frac{f(x)-f(y)}{|x-y|^{n+2s}}dy \\
&= C_{n,s}\lim_{\varepsilon \to 0^{+}} \int_{|y-x|\geq \varepsilon} \frac{f(x)-f(y)}{|x-y|^{n+2s}}dy,
\end{split}
\]
for some constant $C_{n,s}>0$.  Hence,  for non-integer $s>1$, the higher power fractional Laplacian can be understood as
\[
(-\Delta)^s = (-\Delta)^{s-[s]} (-\Delta)^{[s]},
\] 
where $[\cdot]$ is the usual floor function. We also refer to the extension method \cite{CS07} to define the fractional Laplacian.    With the fractional Laplacian $(-\Delta)^s$  defined above in $\R^n$, we consider the following general form of  elliptic equation \eqref{EqRn}, inspired by \cite{JLX08}, 
\begin{equation}\label{EqDeltaM}
(-\Delta)^s u=f(|x|,u) \quad\text{and}\quad u>0 \text{ in }\R^n\setminus\{\0\},
\end{equation}
for some smooth function $f$. The equation \eqref{EqDeltaM} is related the problem of prescribing $Q$-curvature in conformal geometry, that we present below to make our paper more comprehensive. 

First, let $s\in \N\setminus\{n/2\}$ be an integer, we consider the model $(\S^n, g_{\S^n})$ equipped with the standard metric $g_{\S^n}$ and let $\Delta_{\S^n}$ be the Laplace--Beltrami operator  on $\S^n$.   It was discovered by Graham, Jenne, Mason and Sparling \cite{GJMS92} that there exists a generalized operator of order $2s$ to  the well-known conformal Laplace operator 
\[
-\Delta_{\S^n} + \frac{n(n-2)}4
\]
in this setting. This operator is currently well-known as the GJMS operator. With the standard metric $g_{\S^n}$ on $\S^n$, we have the precise formula 
\begin{equation}\label{eq-GJMS}
P_{2s,g_{\S^n}} = \prod\limits_{k =1}^s { \Big( -\Delta_{\S^n} + \Big( \frac n2 - k \Big) \Big( \frac n2 + k - 1 \Big) \Big)}.
\end{equation}
The GJMS operator is conformal in the sense that with any metric $\widetilde g= v^{4/(n-2s)}g_{\S^n}$  for some smooth function $v$ on $\S^n$, we have the following relation between the two operators $P_{2s,\widetilde g}$ and $P_{2s,g_{\S^n}}$,
\begin{equation}\label{EqRelate}
P_{2s, \widetilde g}(\varphi)= v^{-\frac{n+2s}{n-2s}}P_{2s,g_{\S^n}}(v\varphi) 
\end{equation}
for any smooth, positive function $\varphi$ on $\S^n$.  We set $\varphi\equiv 1$ in  \eqref{EqRelate} to obtain 
\[
P_{2s,g_{\S^n}}(v) =P_{2s, \widetilde g}(1)v^{\frac{n+2s}{n-2s}}.
\]
Owing to \cite[Eq. (1.12)]{Juh13}, we have
\[
P_{2s, \widetilde g}(1)=(-1)^s \big(\frac{n}2-s\big)Q_{2s, \widetilde g}
\]
for some scalar function $Q_{2s,\widetilde g}$ known that the $Q$-curvature associated with the GJMS operator $P_{2s, \widetilde g}$. We thus obtain from \eqref{EqRelate} that
\begin{equation}\label{EqSn}
P_{2s, g_{\S^n}}(v)= K v^{\frac{n+2s}{n-2s}} \quad\text{on }\S^n,
\end{equation}
for some smooth function $K$ on $\S^n$.

Second, let $s\in \R_+ \setminus\{n/2\}$,   the fractional extension to the GJMS operator \eqref{eq-GJMS} fulfilling \eqref{EqRelate}  on $\S^n$  has been widely studied.  Repeating our previous work with Ng\^o \cite{LeNgoNguyen}, we briefly show the construction of  $P_{2s,g_{\S^n}}$ (see \cite[section 6.4]{Gon18} and \cite[section 3]{KL22}). Let $\N_0 := \N \cup \{ 0\}$ and $\{Y_l\}_{l \in \N_0}$ be the $L^2(\S^n)$-orthonormal basis of spherical harmonics of degree $l$.  In fact, all $Y_l$ are the eigenfunctions of the (negative) Laplace--Beltrami operator $-\Delta_{\S^n}$ on $\S^n$ corresponding to the eigenvalues $\lambda_l = l (l+ n-1)$ with $l \in \N_0$, namely 
\begin{equation}\label{eq-Yl}
-\Delta_{\S^n} Y_{l} = \lambda_l Y_{l}, \quad \lambda_l = l (l+ n-1).
\end{equation}
For any $v\in L^2(\S^n)$,  we have the unique representation
\[
v = \sum_{l \in \N_0} v_{l}Y_{l}.
\]
Let 
\begin{equation}\label{Alpha}
\alpha_{2s,n}(l)= \frac{\Gamma(l+n/2+s)}{\Gamma(l+n/2-s)}.
\end{equation}
the fractional operator $P_{2s,g_{\S^n}}$ is now defined by
\begin{equation}\label{eq-GJMS(u)}
P_{2s,g_{\S^n}} (v) = \sum_{l \in \N_0} \alpha_{2s,n}(l) v_{l}Y_{l},
\end{equation}
provided the right hand sides converges in $L^2 (\S^n)$. The fractional operator $P_{2s,g_{\S^n}}$ \eqref{eq-GJMS-frac} is well-defined in the whole range $s > 0$, provided that $\alpha_{2s,n}(l) = 0$ when the denominator in \eqref{Alpha} vanishes.   Equivalently,   the operator $P_{2s,g_{\S^n}}$ acts on $L^2(\S^n)$ by multiplication with $\alpha_{2s,n}(l)$.  An  alternative expression for $P_{2s,g_{\S^n}}$ is as follows
\begin{equation}\label{eq-GJMS-frac}
P_{2s,g_{\S^n}} =\frac{\Gamma(B+1/2+s)}{\Gamma(B+1/2-s)} \quad\text{with} \quad B=\sqrt{-\Delta_{\S^n}+\frac{(n-1)^2}4},
\end{equation}
where $\Gamma$ is the usual Gamma function. For $s$ integer, we obtain that 
\[
\begin{split}
P_{2s,g_{\S^n}} &= \big(B+s-\frac12 \big)\big(B+s-\frac32 \big)\dots \big(B-s+\frac32 \big)\big(B-s+\frac12 \big) \\
&= \prod_{k=1}^s \Big( B^2 - \big( \frac{2s-2k+1}2 \big)^2 \Big),
\end{split}
\]
that is exactly \eqref{eq-GJMS}. On spherical harmonics $Y_l$ of degree $l\in \N_0$, the operator $B$ acts by multiplication with $l+ (n-1)/2$ and therefore the operator $P_{2s,g_{\S^n}}$ acts by multiplication with $\alpha_{2s,n}(l)$ as stated \eqref{Alpha}.

With the above fractional GJMS operator $P_{2s,g_{\S^n}}$, we still consider the equation \eqref{EqSn}.    Let us write $N = (1,0,..,0) \in \S^n$ as the north pole, we recall  the stereographic projection $\pi_N :\S^n \to \R^n \setminus\{\0\}$ and  it is well-known that
\[
(\pi_N^{-1})^* (g_{\S^n}) = \big( \frac 2{1+|x|^2} \big)^2 dx^2.
\]
Making use of \cite[Lemma 4]{FKT} gives us that
\[
(-\Delta)^{\lfloor s \rfloor- s } \Big( \big( \frac 2{1+|x|^2} \big)^{\frac {n+2s}2} P_{2s,g_{\S^n}} (v) \circ \pi_N^{-1} \Big) = (-\Delta)^{\lfloor s \rfloor} \Big( \big( \frac 2{1+|x|^2} \big)^{\frac {n-2s}2} v \circ \pi_N^{-1} \Big)
\]
in $\R^n$. Therefore, if we let 
\begin{equation}\label{eq-u=v}
u(x) = \big( \frac 2{1+|x|^2} \big)^{\frac {n-2s}2} (v \circ \pi_N^{-1})(x),
\end{equation}
then $u$ solves
\begin{equation}\label{EqRn1}	
(-\Delta)^{\lfloor s \rfloor} u(x)= (-\Delta)^{\lfloor s \rfloor -s} ( h(|x|) u^{\frac{n+2s}{n-2s}}(x) ) \quad\text{in } \R^n\setminus\{\0\},
\end{equation}	
where $h$ is a smooth function on $\R_+$. Heuristically speaking, equation \eqref{EqRn1} can be seen as 
\begin{equation}\label{EqRn}
(-\Delta )^s u(x) =  h(|x|) u^{\frac{n+2s}{n-2s}}(x) \quad\text{in } \R^n\setminus\{\0\},
\end{equation}
which is of type \eqref{EqDeltaM}.
We refer to the arguments in \cite[page 8]{FKT} for the reason that we cannot rigorously claim this.  

Not only in conformal geometry, but also problems involving the fractional Laplacian operator $(-\Delta)^s$ of type \eqref{EqDeltaM} have been widely studied in other mathematical models. We refer to  \cite{BG90, Con06, CV10} and the references therein for the picture in numerous physical phenomena, such as anomalous diffusion and quasi-geostrophic flows, turbulence and water waves, molecular dynamics, and relativistic quantum mechanics of stars. Let us mention also the appearance of the (higher-order)fractional Laplacian  in  probability and finance \cite{CT04, App09}, and in  nonlinear elasticity and crystal dislocation \cite{Ant05, DFV14}.

Studying the radial symmetry of solutions to Eq. \eqref{EqDeltaM} is of our interest. However,  it is not easy to extend the condition \eqref{Condition-JLX} of Jin, Li and Xu \cite{JLX08} to the higher-order elliptic equation \eqref{EqDeltaM} even  for $s$ integer. The main reason is that the proof in \cite[Prop. 2.1]{JLX08} relies heavily on the maximum principle. So that, in this paper, we move to the corresponding integral equation of  Eq. \eqref{EqDeltaM},
\begin{equation}\label{eq-integral-neg}
u(x)= \int_{\R^n} |x-y|^p f(|y|,u(y)) dy \quad\text{in }\R^n\setminus\{\0\}.
\end{equation}
Indeed,  having  the fundamental solution of the fractional polyharmonic equation 
\[
(-\Delta)^{s} u = 0 \quad\text{in }\R^n \setminus \{ \0\}
\]
(see \cite[page 4]{LeNgoNguyen} and see also \cite[chapter 2]{CLM20}), we formally deduce the integral equation \eqref{eq-integral-neg} with $p=2s-n>0$. Assuming that $u\in C^1(\R^n\setminus\{\0\})$ is a positive solution to \eqref{eq-integral-neg},  we  now state the main result of this note.
\begin{theorem}\label{thm-great}
Let $n\geq 2$ and $p>0$. Let $f(\alpha,\beta): \R_+\times \R_+\to \R_+$ be a  continuous function satisfying the following condition  
\begin{equation}\label{Condition-F1}
\begin{split}
&\text{ for any }  x\neq \0, 0<\lambda<|x|, |z-x|>\lambda \text{ and } a\leq b, \text{ there holds } \\
& \qquad f(|z|,a)>\big(  \frac{\lambda}{|z-x|}\big)^{p+2n} f\Big( \Big| x+\frac{\lambda^2(z-x)}{|z-x|^2}\Big|, \big( \frac{\lambda}{|z-x|}\big)^{p} b\Big).
\end{split}
\end{equation}
Then, any positive solution $u \in C^1(\R^n\setminus\{\0\})$ to  \eqref{eq-integral-neg} must be radially symmetric and monotone increasing with respect to the origin.
\end{theorem}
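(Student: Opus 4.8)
The plan is to run the \emph{method of moving spheres in integral form}, taking the origin as the prospective centre of symmetry. Fix $x\neq\0$, and for $0<\lambda<|x|$ (so that $\0\notin\ov{B_\lambda(x)}$, which is exactly the range of $\lambda$ appearing in \eqref{Condition-F1}) write $\iota_{x,\lambda}(y)=x+\lambda^{2}(y-x)/|y-x|^{2}$ for the inversion about $\partial B_\lambda(x)$, set $\Sigma_{x,\lambda}=\R^{n}\setminus\ov{B_\lambda(x)}$, and introduce the Kelvin-type transform
\[
u_{x,\lambda}(y)=\big(|y-x|/\lambda\big)^{p}\,u\big(\iota_{x,\lambda}(y)\big),
\]
the exponent being forced so that $u(\iota_{x,\lambda}(y))=(\lambda/|y-x|)^{p}u_{x,\lambda}(y)$, which matches the weight carried by $b$ in \eqref{Condition-F1}. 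The goal is to prove that for every $x\neq\0$ one has $u\le u_{x,\lambda}$ on $\Sigma_{x,\lambda}$ for a nontrivial range of $\lambda$, and then to feed this family of inequalities into a calculus lemma for moving spheres centred away from the origin, in the spirit of Li--Zhang and of \cite{JLX08}.

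\textbf{Algebraic core.} Splitting the integral in \eqref{eq-integral-neg} over $B_\lambda(x)$ and its complement, changing variables by $\iota_{x,\lambda}$ (whose Jacobian is $(\lambda/|y-x|)^{2n}$) in the inner piece, and using the inversion identities
\[
|\iota_{x,\lambda}(z)-\iota_{x,\lambda}(y)|=\frac{\lambda^{2}|z-y|}{|z-x|\,|y-x|},\qquad |\iota_{x,\lambda}(z)-y|=\frac{|y-x|}{|z-x|}\,|z-\iota_{x,\lambda}(y)|,
\]
one is led, after routine but lengthy bookkeeping, to the identity (valid for $z\in\Sigma_{x,\lambda}$)
\[
u(z)-u_{x,\lambda}(z)=\int_{\Sigma_{x,\lambda}}H_{x,\lambda}(z,y)\Big[f(|y|,u(y))-\big(\tfrac{\lambda}{|y-x|}\big)^{p+2n}f\big(|\iota_{x,\lambda}(y)|,u(\iota_{x,\lambda}(y))\big)\Big]\,dy,
\]
where $H_{x,\lambda}(z,y)=|z-y|^{p}-(|z-x|/\lambda)^{p}|\iota_{x,\lambda}(z)-y|^{p}$. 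Two signs then carry the argument. \emph{The kernel is negative:} the elementary identity
\[
|\iota_{x,\lambda}(z)-y|^{2}-\big(\tfrac{\lambda}{|z-x|}\big)^{2}|z-y|^{2}=\frac{(|z-x|^{2}-\lambda^{2})(|y-x|^{2}-\lambda^{2})}{|z-x|^{2}}
\]
has a positive right-hand side whenever $|z-x|,|y-x|>\lambda$, so $H_{x,\lambda}<0$ on $\Sigma_{x,\lambda}\times\Sigma_{x,\lambda}$. \emph{The defect is positive:} since $u(\iota_{x,\lambda}(y))=(\lambda/|y-x|)^{p}u_{x,\lambda}(y)$, if $u(y)\le u_{x,\lambda}(y)$ then \eqref{Condition-F1} applied with $a=u(y)\le b=u_{x,\lambda}(y)$ yields $f(|y|,u(y))>(\lambda/|y-x|)^{p+2n}f(|\iota_{x,\lambda}(y)|,u(\iota_{x,\lambda}(y)))$, so the bracket is strictly positive. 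Hence, \emph{if $u\le u_{x,\lambda}$ on $\Sigma_{x,\lambda}$ then in fact $u<u_{x,\lambda}$ there}: this self-improvement is what allows the spheres to be swept.

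\textbf{Sweeping.} For $\lambda$ small (with $x$ fixed) one checks $u\le u_{x,\lambda}$ on all of $\Sigma_{x,\lambda}$: away from $\partial B_\lambda(x)$ the factor $(|y-x|/\lambda)^{p}$ is large while $u(\iota_{x,\lambda}(y))\ge\inf_{B_\lambda(x)}u\to u(x)>0$; near $\partial B_\lambda(x)$ a first-order expansion (this is where $u\in C^{1}(\R^{n}\setminus\{\0\})$ and $u>0$ enter) gives $\partial_{r}(u_{x,\lambda}-u)=\frac{p}{\lambda}u-2\partial_{r}u>0$ on the sphere; and, using the continuity of $u$ at $\0$ implied by \eqref{eq-integral-neg}, one has $u(y)\sim C_{0}|y|^{p}$ as $|y|\to\infty$ with $C_{0}=\int_{\R^{n}}f(|\xi|,u(\xi))\,d\xi$, against $u_{x,\lambda}(y)\sim\lambda^{-p}u(x)|y|^{p}$, which is favourable precisely for $\lambda<(u(x)/C_{0})^{1/p}$. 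Setting $\bar\lambda(x)=\sup\{\mu:u\le u_{x,\lambda}\text{ on }\Sigma_{x,\lambda}\text{ for all }\lambda\in(0,\mu]\}$, one shows $\bar\lambda(x)$ cannot be a place where $u<u_{x,\bar\lambda(x)}$ holds with room to spare: the self-improvement, a quantitative lower bound for $u_{x,\bar\lambda(x)}-u$ on compact subsets extracted from the integral identity (using that \eqref{Condition-F1} is a strict inequality), and a control of the behaviour near $\partial B_{\bar\lambda(x)}(x)$ and near infinity would then let one push $\lambda$ a little further, contradicting maximality. One concludes that for every $x\neq\0$ the inequality $u\le u_{x,\lambda}$ holds on $\Sigma_{x,\lambda}$ throughout the admissible interval $\lambda\in\big(0,\min\{|x|,(u(x)/C_{0})^{1/p}\}\big)$.

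\textbf{Conclusion and main obstacle.} Feeding this family of moving-sphere inequalities --- indexed by all $x\neq\0$ and all admissible $\lambda$ --- into the calculus lemma for inversions centred away from the origin forces $u$ to depend on $x$ only through $|x|$ and to be monotone in $|x|$; the monotonicity comes out \emph{increasing} --- the opposite of \cite{JLX08} --- precisely because the kernel $|x-y|^{p}$ carries a \emph{positive} power, so the Kelvin weight $(\lambda/|y-x|)^{-p}$ enters with the opposite sign to the one in \cite{JLX08}. The step I expect to be the genuine obstacle is this endgame together with the ``no-stall'' part of the sweeping: unlike the differential setting of \cite{JLX08} there is no maximum principle or Hopf lemma available to guarantee strictness up to $\partial B_\lambda(x)$ and to prevent the inequality from freezing, so all the needed strictness and quantitative control have to be squeezed out of the integral identity itself and of the strict inequality in \eqref{Condition-F1}; handling the non-compactness at infinity and the degeneration as $\lambda\uparrow|x|$ (where $\0\in\partial B_\lambda(x)$ and $f(|y|,\cdot)$ is sampled near $y=\0$) is the delicate part.
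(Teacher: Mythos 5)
Your proposal follows the same strategy as the paper: the Kelvin transform $u_{x,\lambda}=(|y-x|/\lambda)^{p}u(\iota_{x,\lambda}(y))$ with exponent $p$ chosen to match the weight in \eqref{Condition-F1}, the integral identity for $u_{x,\lambda}-u$ obtained by splitting $\R^{n}$ across $\partial B_\lambda(x)$ and changing variables via the Jacobian $(\lambda/|y-x|)^{2n}$, the sign of the kernel from the quadratic identity you quote, strict positivity of the defect from \eqref{Condition-F1} with $a=u(y)$, $b=u_{x,\lambda}(y)$, a small-$\lambda$ start combining a near-sphere $C^{1}$ estimate with the asymptotics $u(y)\sim C_{0}|y|^{p}$, a ``no-stall'' continuation past $\bar\lambda(x)$, and finally the Li-type calculus endgame with $x=Re$, $\lambda=R-a$, $R\to\infty$ giving $u(y)\le u(-y)$ and then monotonicity. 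These are exactly the paper's Lemma 2.2, Lemmas 2.4--2.5, Proposition 2.3, and the proof of Theorem 1.1.

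One point needs correction: your conclusion that the sweep is valid ``throughout the admissible interval $\lambda\in(0,\min\{|x|,(u(x)/C_{0})^{1/p}\})$'' is both logically problematic and unnecessary. The ceiling $(u(x)/C_{0})^{1/p}$ is only what makes the \emph{start} at small $\lambda$ work; it is not a stopping point for the continuation. If that quantity were ever strictly smaller than $|x|$ your range would not reach $\lambda\uparrow|x|$, and the endgame $\lambda=R-a$ (with $\lambda/|x|\to 1$) would fail. The paper's Proposition 2.3 proves $\bar\lambda(x)\ge|x|$ outright: assuming $\bar\lambda_{0}<|x_{0}|$, the integral representation plus Fatou give $\liminf_{|y|\to\infty}(u_{x_{0},\bar\lambda_{0}}-u)(y)/|y-x_{0}|^{p}>0$ strictly (because the defect $H_{x_{0},\bar\lambda_{0}}>0$ pointwise), so the coefficient at infinity never saturates and the continuation never stalls; a posteriori this forces $|x|\le(u(x)/C_{0})^{1/p}$, making the extra constraint vacuous. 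So you should drop the $(u(x)/C_{0})^{1/p}$ cap from the statement of the sweep and instead establish $\bar\lambda(x)\ge|x|$ via the Fatou/strict-positivity estimate near infinity, together with the near-sphere lower bound on the kernel (the paper's estimates of $I$ and $II$) to control the annulus $\lambda\le|y-x_{0}|\le\bar\lambda_{0}+\bar\delta_{0}$.
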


Our main theorem reveals a sufficient condition of $f$ to show the radial symmetry of  positive solutions to  the general integral equation  \eqref{eq-integral-neg}  via the method of moving spheres.  Proving the radially symmetry property is an initial step in studying the integral equation \eqref{eq-integral-neg}, as well as the corresponding partial differential equation \eqref{EqDeltaM}. It has a key role in the classification of entire solutions (Liouville-type theorem) and in the characterization of their asymptotic behavior, see some celebrated papers \cite{CBS89, WeiXu, Li, CLO06} and  some recent papers \cite{Yan21, DPQ22, CDQ23, DQ23, HyderNgo}. 

We have the following remark from Theorem \ref{thm-great}, that is partly complement to the recent  result of Hyder-Ng\^o \cite{HyderNgo}. 
Indeed, they  consider the case 
\[
f(|y|,u)=f_\ve(|y|,u):= \ve \big(\frac1{1+|y|^2}\big)^{p+n} u+  \big( \frac1{1+|y|^2}\big)^{\frac{p+2n-pq}2} u^{-q} \quad (q>0).
\]
The authors make use of the method of moving planes to show that any smooth positive  solution $u$ to the integral equation 
\begin{equation}\label{EqHN}
u(x)=\int_{\R^n} |x-y|^p f_\ve(|y|,u(y))dy \quad\text{in }\R^n,
\end{equation}
appearing in a perturbative approach to study (sub)critical Sobolev inequalities on higher dimensional sphere,  is radially symmetric if one of the following conditions hold: $0<\ve \ll 1, 0<q \leq (p+2n)/p$ and $\ve=0, 0<q < (p+2n)/p$.  Clearly, any solution \eqref{EqHN} in $\R^n$ is also a solution to \eqref{EqHN} in $\R^n\setminus\{\0\}$.

We now  use the condition \eqref{Condition-F1} to revisit the result of Hyder-Ng\^o \cite{HyderNgo} as  $\ve=0$ and $q<(p+2n)/p$. That means, we let
\[
f(\alpha,\beta) =(1+\alpha^2)^{ \frac {pq-p-2n}2} \beta^{-q}.
\]
and use Theorem \ref{thm-great}  to deduce that for   any $x\neq \0, 0<\lambda<|x|, |z-x|>\lambda$ and  $a\leq b$,  there holds 
\[
(1+|z|^2)^{ \frac {pq-p-2n}2 } a^{-q} > \Big( 1+ \big|x+\frac{\lambda^2(z-x)}{|z-x|^2}\big|^2\Big)^{\frac {pq-p-2n}2 } \big(  \frac{\lambda}{|z-x|}\big)^{p+2n-pq} b^{-q}.
\]
This inequality is equivalent to 
\begin{equation}\label{IneHN}
(\frac{b}a)^q > \Big( \frac{\lambda^2}{|z-x|^2} \frac{1+|z|^2}{1+ \big|x+\frac{\lambda^2(z-x)}{|z-x|^2}\big|^2}   \Big)^{\frac{p+2n-pq}2}. 
\end{equation}
Thanks to \cite[Lemma 3.4]{LeNgoNguyen}, we have that for any $x\neq \0$, $|z-x|>\lambda$ and $\lambda \in (0,\sqrt{1+|x|^2})$,
\[
\frac{\lambda^2}{|z-x|^2} \frac{1+|z|^2}{1+ \big|x+\frac{\lambda^2(z-x)}{|z-x|^2}\big|^2} <1.
\]
Hence,  the inequality \eqref{IneHN} holds true if  $0<q<(p+2n)/p$. We thus have any $C^1$ positive solution to the integral equation \eqref{EqHN} in $\R^n\setminus\{\0\}$ is radially symmetry if  $0<q<(p+2n)/p$.

\section{Symmetry property of solutions to \eqref{eq-integral-neg}}
\label{sec-IE-classification}

\subsection{The method of moving spheres}
We introduce the preliminaries of the method of moving spheres to prove Theorem \ref{thm-great}.   In what follows, let $u>0$  be a $C^1$-solution to \eqref{eq-integral-neg} with $p>0$.

Let $B_r(x)$ be the ball with radius $r>0$ and center at $x \in \R^n$. Given the real parameter $\lambda>0$ and $x\in \R^n$, we denote by 
\[
\xi^{x,\lambda}= x+ \frac{\lambda^2(\xi-x)}{|\xi-x|^2} 
\] 
the inversion of $\xi \in \R^n\setminus\{x\}$ via the sphere $\partial B_\lambda(x) $.	
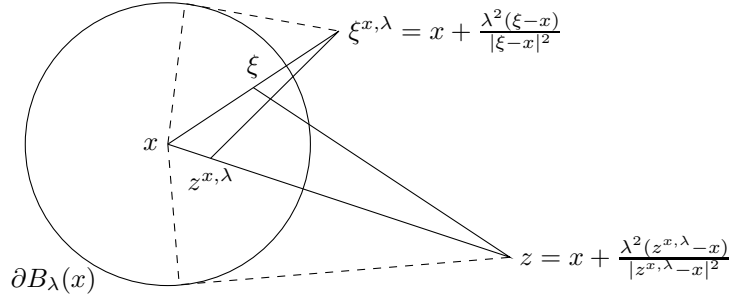
\begin{figure}[H]
\begin{tikzpicture}[scale=0.75]
\draw[-] (0,0) circle (2.5);
\draw[-] (0, 0) node[anchor = east] {$x$} 
-- (1.5,1) node[anchor =south] {$\xi$} 
-- (3,2) node[anchor =west] {$\xi^{x,\lambda} =x+ \frac{\lambda^2 (\xi-x)}{|\xi- x|^2}$} 
-- (0.75,-0.25);
\draw[dashed] (0,0) -- (0.3, 2.485) -- (3,2);
\draw[-] (0, 0) -- (0.75,-0.25) node[anchor =north] {$z^{x,\lambda}$} 
-- (6,-2) node[anchor =west] {$z = x+\frac{\lambda^2 (z^{x,\lambda} - x)}{|z^{x,\lambda}-x|^2}$}
-- (1.5,1) ;
\draw[dashed] (0,0) -- (0.2, -2.485) -- (6,-2);
\node [anchor =north] at (-2,-2) {$\partial B_\lambda (x)$};
\end{tikzpicture}
\caption[]{Inversion in the method of moving spheres.}\label{fig-MS}
\end{figure}
We also denote by $u_{x,\lambda}$ the Kelvin transform of $u$ via the sphere $\partial B_\lambda(x) $, namely 
\[
u_{x,\lambda} (\xi) = \big( \frac {|\xi-x|}{\lambda} \big)^{p} u (\xi ^ {x,\lambda}) = \big( \frac {|\xi-x|}{\lambda} \big)^{p} u \big( x+ \frac{\lambda^2(\xi-x)}{|\xi-x|^2} \big),
\]
for any $\xi \in \R^n\setminus\{x\}$.
Therefore
\begin{equation*}\label{eq-KelvinKelvin}
u_{x,\lambda} (\xi^ {x,\lambda}) = \big( \frac {|\xi^ {x,\lambda}-x|}{\lambda} \big)^{p} u (\xi)= \big( \frac {\lambda}{|\xi-x|} \big)^{p} u (\xi).
\end{equation*}
Moreover, it is clear that $(\xi^ {x,\lambda})^{x, \lambda}= \xi$ and 
\begin{equation}\label{eq-|Kelvin|}
|z-x| |\xi-x||\xi^{x,\lambda} - z^{x,\lambda}| = \lambda^2 |\xi-z|. 
\end{equation}
	
The basic idea of the method of moving spheres is to compare $u$ and $u_{x,\lambda}$ pointwise with small $\lambda >0$. Indeed, we will prove in Lemma \ref{lem-MS-small} the following inequality,
\begin{equation}\label{CompareU}
u_{x,\lambda} (y)\geq u(y)
\end{equation}
for any $y$ satisfying $|y-x| \geq \lambda> 0$ and  for sufficiently small $\lambda > 0$. 	 The proof of \eqref{CompareU} relies  on the  condition \eqref{Condition-F1} of $f$. After that, we increase $\lambda$ toward the largest value of possible to ensure that the inequality \eqref{CompareU} still holds true.

\subsection{A comparision lemma}

\begin{lemma}\label{lem-MS-small}
There exists some $\lambda_0 > 0$ such that for any $\lambda \in (0, \lambda_0)$, we have 
\[
u_{x,\lambda} (y)\geq u(y)
\] 
for any $y$ satisfying $|y-x| \geq \lambda> 0$.
\end{lemma}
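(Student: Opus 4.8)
The plan is to obtain the comparison from an \emph{exact} integral identity for $u_{x,\lambda}-u$, a sign property of the associated kernel, and a careful sign analysis of the remaining factor when $\lambda$ is small; here $x\in\R^n\setminus\{\0\}$ is fixed and $\lambda<|x|$. I would first record two preliminary facts coming from \eqref{eq-integral-neg}: since $u$ is finite somewhere, $\int_{\R^n}(1+|z|)^p f(|z|,u(z))\,dz<\infty$, hence $u$ extends continuously to all of $\R^n$ (in particular it is bounded near $\0$) and $u(y)\le C(1+|y|)^p$. Then, using the change of variables $z\mapsto z^{x,\lambda}$ (which maps $B_\lambda(x)$ onto its exterior with Jacobian $(\lambda/|z-x|)^{2n}$), the relation $(\xi^{x,\lambda})^{x,\lambda}=\xi$ and \eqref{eq-|Kelvin|}, I would derive
\[
u_{x,\lambda}(\xi)-u(\xi)=\int_{|z-x|\ge\lambda}\mathcal{K}_\lambda(\xi,z)\,\big[g_\lambda(z)-f(|z|,u(z))\big]\,dz ,
\]
where $\mathcal{K}_\lambda(\xi,z)=|\xi-z|^p-\big(|\xi-x|/\lambda\big)^p|\xi^{x,\lambda}-z|^p$ and $g_\lambda(z)=\big(\lambda/|z-x|\big)^{p+2n}f\big(|z^{x,\lambda}|,\big(\lambda/|z-x|\big)^p u_{x,\lambda}(z)\big)$; the growth bounds above, together with $g_\lambda(z)=O(|z|^{-(p+2n)})$ at infinity, make the integral absolutely convergent and justify the change of variables. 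Rewriting $|\xi-z|^p$ and $|\xi^{x,\lambda}-z|^p$ in terms of $|\xi-x|^2$, $|z-x|^2$ and $(\xi-x)\cdot(z-x)$, an elementary computation shows $\mathcal{K}_\lambda(\xi,z)\le 0$ whenever $|\xi-x|\ge\lambda$ and $|z-x|\ge\lambda$, the inequality reducing to $(|\xi-x|^2-\lambda^2)(|z-x|^2-\lambda^2)\ge 0$.

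In view of this, it suffices to prove that for small $\lambda$ the factor $g_\lambda(z)-f(|z|,u(z))$ is \emph{negative} for every $z$ with $|z-x|>\lambda$. I would first confine the bad set $\Sigma_\lambda:=\{\,|z-x|>\lambda:\ u(z)>u_{x,\lambda}(z)\,\}$ to a fixed compact annulus $\mathcal{A}$ about $x$ avoiding $\partial B_\lambda(x)$, $\0$ and infinity, using only that $u\in C^1$ is positive near $x$ and the growth bound: near the sphere, writing $t=|z-x|/\lambda\ge1$ and using $|u(z^{x,\lambda})-u(z)|\le C|z-z^{x,\lambda}|=C\lambda(t-1/t)$ together with the elementary bound $(t^{p+1}-t^{p-1})/(t^p-1)\le C_p\,t$, one gets $u_{x,\lambda}\ge u$ on a fixed annulus $\{\lambda\le|z-x|\le r_1\}$; at infinity, $u_{x,\lambda}(z)\ge(|z-x|/\lambda)^p\inf_{B_{r_1}(x)}u$ dominates $u(z)\le C(1+|z|)^p$ once $|z|$ is large and $\lambda$ small; near $\0$, $u$ is bounded while $u_{x,\lambda}(z)\ge C\lambda^{-p}\to\infty$. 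On the \emph{fixed} compact set $\mathcal{A}$ one has $\big(\lambda/|z-x|\big)^p u_{x,\lambda}(z)=u(z^{x,\lambda})\to u(x)$ and $|z^{x,\lambda}|\to|x|$ uniformly as $\lambda\to0$, so by continuity of $f$ the term $g_\lambda\to0$ uniformly on $\mathcal{A}$, while $f(|z|,u(z))$ is bounded below by a positive constant there; hence $g_\lambda-f<0$ on $\mathcal{A}$ for $\lambda$ small. Off $\mathcal{A}$ we have $u\le u_{x,\lambda}$ (because $\Sigma_\lambda\subset\mathcal{A}$), so condition \eqref{Condition-F1} gives $g_\lambda-f<0$ there too. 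Combining, $g_\lambda-f<0$ on all of $\{|z-x|>\lambda\}$, and since $\mathcal{K}_\lambda\le0$ the identity above yields $u_{x,\lambda}(\xi)\ge u(\xi)$ for every $|\xi-x|\ge\lambda$.

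I expect the delicate point to be the estimate near $\partial B_\lambda(x)$: there $u$ and $u_{x,\lambda}$ agree to zeroth order, so one must extract the correct first-order behavior in $|z-x|-\lambda$, which is exactly where $C^1$-regularity of $u$ (not merely continuity) is used and where the thresholds for $r_1$ and for $\lambda_0$ must be made uniform. Some bookkeeping is also needed to check that the last paragraph is not circular---$\Sigma_\lambda\subset\mathcal{A}$ and the sign of $g_\lambda-f$ on $\mathcal{A}$ are both obtained without appealing to the conclusion---and to justify the absolute convergence and the change of variables in the identity.
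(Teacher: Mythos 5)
Your proof is correct, but it takes a genuinely different and heavier route than the paper's. The paper's argument is entirely elementary and does \emph{not} invoke the integral representation of $u_{x,\lambda}-u$, the sign of the kernel, or the structural condition \eqref{Condition-F1}: it splits $\{|y-x|\ge\lambda\}$ into just two pieces. Near the sphere ($\lambda<|y-x|\le\lambda_1$), one observes that $r\mapsto r^{-p/2}u(x+r\theta)$ is decreasing for $r$ small (this is where $C^1$ and positivity enter), which directly yields $u(y)\le u_{x,\lambda}(y)$; away from the sphere ($|y-x|\ge\lambda_1$), the growth bound $u(y)\le C|y-x|^p$ obtained from $\int(1+|z|^p)f\,dz<\infty$ together with $u_{x,\lambda}(y)\ge(|y-x|/\lambda_0)^p\inf_{B_{\lambda_1}(x)}u$ finishes the job once $\lambda_0$ is small enough. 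Your approach instead brings in the exact identity \eqref{eq-u-u=FG}, the nonpositivity of $\mathcal K_\lambda$, and condition \eqref{Condition-F1} to handle an intermediate annulus. The subtle point is that the ``confinement'' of the bad set $\Sigma_\lambda$ you carry out already uses exactly the paper's two elementary estimates (the $C^1$ first-order bound near $\partial B_\lambda(x)$, the $|y-x|^p$ growth at infinity, boundedness near $\0$), and if you push the infinity/growth estimate slightly harder --- note $(|z-x|/\lambda)^p\inf_{B_{r_1}(x)}u\ge u(z)$ holds for \emph{all} $|z-x|\ge r_1$ once $\lambda^{-p}\inf u$ beats the constant in $u(z)\le C|z-x|^p$, not just for $|z|$ large --- then $\Sigma_\lambda$ is already empty and the detour through the integral identity and \eqref{Condition-F1} is unnecessary. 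What the elementary route buys is that the ``start the sphere'' step is independent of \eqref{Condition-F1}, which is conceptually cleaner: the structural hypothesis on $f$ is only needed later in Proposition \ref{LeM=002} when the sphere is pushed outward.
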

\begin{proof}
This lemma will be proved in two steps.

In the first step, we fix $x\in \R^n$ and $\theta$ in the standard unit sphere $\S^n$ and compute that 
\begin{equation*}\label{x1}
\begin{aligned}
\frac{d}{dr} \big( r^{-\frac{p}{2}} u(x+ r\theta) \big) & =-\frac{p}{2} r^{-\frac{p}{2}-1} u(x+ r\theta) + r^{-\frac{p}{2}} (\nabla u) (x + r\theta) \cdot \theta \\
& = r^{-\frac{p}{2}-1} u(x + r\theta) \Big(- \frac{p}{2} + r \frac{\nabla u }{u} \Big|_{x + r\theta } \cdot \theta\Big)\\
& \leq r^{-\frac{p}{2}-1} u(x + r\theta) \Big(- \frac{p}{2} +  r \|\nabla \log u \|_\infty \Big),
\end{aligned}
\end{equation*}
because $u$ is positive and of class $C^1$. That means,  the mapping 
\begin{equation}\label{decrease}
r \mapsto r^{-\frac{p}2} u(x+r\theta)
\end{equation}
is decreasing in the regime 
\[
0<r<  \min\Big(1, \frac{p}{2\|\nabla\log u\|_\infty}\Big).
\]
Now,  let $y$ satisfy $0<\lambda< |y-x| \leq \lambda_1$. That implies
\[
|y^{x,\lambda}-x| \leq \lambda\leq |y-x|.
\]
Hence, choosing  $\theta= (y-x)/|y-x| \in \S^n$ and $r=|y-x|$, we deduce from the decrease \eqref{decrease} that
\[
|y-x|^{-\frac{p}2} u \big(x + |y-x| \frac{y-x}{|y-x|} \big) \leq \big| y^{x, \lambda} -x \big|^{-\frac{p}2} u \big( x + | y^{x, \lambda} - x | \frac{y-x}{|y-x|} \big).
\]
This shows that 
\begin{equation}\label{ine-leq}
u(y) \leq \Big( \frac{\big| y^{x, \lambda} -x \big|}{| y-x|} \Big)^{-\frac{p}2} u(y^{x, \lambda}) = u_{x,\lambda} (y)
\end{equation}
for any $y$ satisfying $0<\lambda< |y-x| \leq \lambda_1$.

In the second step, we consider $|y-x| \geq \lambda_1 > 0$. We will prove that 
\[
u_{x, \lambda}(y) \geq u(y) \quad\text{for }|y-x| \geq \lambda_1 > 0.
\] 
To do this, we will show that 
\begin{equation}\label{z}
\int_{\R^n} (1+|z|^p) f (|z|,u(z)) dz <\infty.
\end{equation}
Indeed, fix $\bar y \in \R^n$ such that $ 1 \leq|\bar y| \leq 2 $ to get that
$$u(\bar y)= \int_{\R^n} |\bar y-z|^{p} f (|z|,u(z))dz < \infty.$$
 It is easy to see that for $ z \in \R^n \setminus B_4(\0)$, we have
\[
|\bar y-z|\geq |z|-|\bar y|\geq \frac{|z|}2\geq 2.
\]
Hence, we get
\begin{equation*}\label{b}
\begin{aligned}
\int_{ \R ^n \setminus B_4(\0)}(1+|z|^p) f(|z|, u(z)) dz &\leq(1+2^p) \int_{ \R ^n \setminus B_4(\0)} |\bar y-z|^{p}f(|z|, u(z)) dz\\
& \leq (1+2^p) u(\bar y) < \infty.
    \end{aligned}
\end{equation*}
It follows from the continuity of $u$ and $f$ that 
\[
\int_{ B_4(\0)}(1+|z|^p) f(|z|, u(z)) dz \leq (1+4^p) \int_{ B_4(\0)}f(|z|, u(z)) dz<\infty.
\]
Thanks to the above inequalities, we obtain \eqref{z}. Thanks to \eqref{z}, we obtain for $|y| \geq 1$ that
\begin{equation}\label{eq-}
\int_{\R^n} \frac{|y-z|^{p}}{|y|^{p}} f(|z|, u(z))dz \leq \int_{\R^n} (1+|z|^{p}) f(|z|, u(z))dz <\infty.
\end{equation}
Thus, using  \eqref{eq-} and using the Lebesgue dominated convergence theorem, we deduce the following result
\[\begin{split}
\lim_{|y| \to \infty} |y|^{-p} u(y) &=  \lim_{|y| \to \infty} \int_{ \R ^n}\frac{|y-z|^{p}}{|y|^{p}} f(|z|, u(z)) dz \\
&= \int_{ \R ^n}f(|z|, u(z))dz \in (0, \infty),
\end{split}
\]
yielding
\[
u(y)\leq C |y-x|^{p}\; \;  \text{for all} \; \; \; |y-x| \geq \lambda_1 \; \text{and for some} \; C>0.
\]
Hence, for small $\lambda_0 \ll \lambda_1$ and for $\lambda \leq \lambda_0$ we have
\begin{equation}\label{ine-geq}
u_{x,\lambda} (y) = \big(\frac {|y-x|}{\lambda } \big)^{p} u (y^{x,\lambda})
\geq \Big(\frac {|y-x|}{\lambda_0 } \Big)^{p} \inf_{B_{\lambda_1} (x)} u \geq u(y)
\end{equation}
for $|y-x| \geq \lambda_1 > 0$. 

The two inequalities \eqref{ine-leq} and \eqref{ine-geq} help us to  complete the proof.
\end{proof}
Let $x \in \R^n \setminus \{\0\}$ be arbitrary, but fixed, we define
\begin{equation}\label{eq-lambda}
\ov \lambda (x):= \sup\left\{ 
\begin{aligned}
\mu>0, \quad & u_{x, \lambda}(y)\geq u(y) \quad \text{for all } \; |y-x| \geq \lambda >0\\
	& \text{and for all } \; 0< \lambda < \mu 
\end{aligned}
\right\}.
\end{equation}
Thanks to Lemma \ref{lem-MS-small}, we have that $\ov \lambda(x)$ is well defined and $0<\ov \lambda(x) \leq +\infty$.

\begin{proposition}\label{LeM=002}
There holds 
\[
\ov\lambda(x) \geq |x| \quad\text{for any }x\in \R^n\setminus\{\0\}. 
\]
\end{proposition}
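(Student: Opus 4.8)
The plan is to argue by contradiction: suppose $\ov\lambda(x) < |x|$ for some fixed $x \in \R^n \setminus \{\0\}$. Write $\bar\lambda = \ov\lambda(x)$ for brevity. By continuity, the comparison inequality $u_{x,\bar\lambda}(y) \geq u(y)$ holds for all $|y-x| \geq \bar\lambda$, and the definition of $\bar\lambda$ as a supremum means it cannot be improved: there is a sequence $\lambda_j \downarrow \bar\lambda$ (or $\lambda_j$ slightly above $\bar\lambda$) and points $y_j$ with $|y_j - x| \geq \lambda_j$ such that $u_{x,\lambda_j}(y_j) < u(y_j)$. The first task is therefore to derive, from the integral equation \eqref{eq-integral-neg} and the condition \eqref{Condition-F1} on $f$, a \emph{strict} inequality $u_{x,\bar\lambda}(y) > u(y)$ in the region $|y-x| > \bar\lambda$ (away from the boundary sphere), which will contradict the near-equality forced by the failure of the supremum to be larger.

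The heart of the matter is the integral identity for the Kelvin transform. Starting from $u(\xi) = \int_{\R^n} |\xi - z|^p f(|z|, u(z))\, dz$, one performs the change of variables $z = w^{x,\bar\lambda}$ on the region $|w-x| \geq \bar\lambda$ (which is mapped to $|z-x| \leq \bar\lambda$) and uses the Jacobian of the inversion together with the identity \eqref{eq-|Kelvin|}, $|z-x|\,|\xi-x|\,|\xi^{x,\lambda} - z^{x,\lambda}| = \lambda^2 |\xi - z|$, to rewrite $u_{x,\bar\lambda}(\xi) - u(\xi)$ as a single integral over $\{|z-x| \geq \bar\lambda\}$ of the form
\[
u_{x,\bar\lambda}(\xi) - u(\xi) = \int_{|z-x|\geq \bar\lambda} K(\xi,z)\Big[ f(|z|, u(z)) - \big(\tfrac{\bar\lambda}{|z-x|}\big)^{p+2n} f\big(|z^{x,\bar\lambda}|, \big(\tfrac{\bar\lambda}{|z-x|}\big)^p u(z)\big)\Big]\, dz,
\]
for a strictly positive kernel $K(\xi,z)$ when $|\xi - x| > \bar\lambda$ (here I would also need the elementary fact that $|\xi - z| > |\xi^{x,\bar\lambda} - z|$ for points outside the sphere, giving positivity of $K$). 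Once this representation is in place, the condition \eqref{Condition-F1} — applied with $\lambda = \bar\lambda < |x|$, the point $z$ playing the role of its "$z$", and with $a = u_{x,\bar\lambda}(z) \geq u(z) = $ well, one has to be careful: the comparison at level $\bar\lambda$ gives $u_{x,\bar\lambda}(z) \geq u(z)$, so taking $a = u(z) \le b = u_{x,\bar\lambda}(z)$ is not directly what \eqref{Condition-F1} wants either; rather one uses it with $a = u(z)$ and $b = u(z)$ is too weak, so the correct move is to combine \eqref{Condition-F1} (which handles the $f$-nonlinearity's scaling) with monotonicity of $f(\alpha,\cdot)$ in its second argument together with $u_{x,\bar\lambda}(z) \ge u(z)$ — forces the bracketed integrand to be strictly positive, hence $u_{x,\bar\lambda}(\xi) > u(\xi)$ strictly.

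With strict inequality on the open region secured, I would close the argument by showing it propagates to a genuine improvement of $\bar\lambda$. The standard device: the map $\lambda \mapsto u_{x,\lambda}(y)$ is continuous (indeed $C^1$) jointly in $(\lambda, y)$ on the relevant closed set, the strict inequality on compact subsets of $\{|y-x| > \bar\lambda\}$ is stable under small perturbations of $\lambda$, and near the sphere $|y-x| = \bar\lambda$ one controls $u_{x,\lambda}(y) - u(y)$ using the gradient bound from Lemma \ref{lem-MS-small} (the monotonicity of $r \mapsto r^{-p/2} u(x+r\theta)$ argument). Combining these gives $u_{x,\lambda}(y) \geq u(y)$ for all $|y-x| \geq \lambda$ and all $\lambda$ in a slightly larger interval $(0, \bar\lambda + \delta)$, contradicting the maximality of $\bar\lambda$. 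Therefore $\bar\lambda \geq |x|$. The main obstacle I anticipate is bookkeeping in the Kelvin-transform change of variables — getting the exponent $p + 2n$ and the companion factor $(\bar\lambda/|z-x|)^p$ in the integrand to match \eqref{Condition-F1} exactly, and verifying $|z^{x,\bar\lambda}| = |x + \bar\lambda^2(z-x)/|z-x|^2|$ is precisely the first argument of $f$ appearing there — together with the subtle point of feeding the already-established inequality $u_{x,\bar\lambda} \geq u$ correctly into the strict hypothesis on $f$.
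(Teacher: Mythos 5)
Your reading of the overall strategy is correct: argue by contradiction, use the integral representation of $u_{x,\bar\lambda}-u$ (Lemma \ref{lem-U-U}), feed the already-established inequality $u_{x,\bar\lambda}\geq u$ into \eqref{Condition-F1} with $a=u(z)$ and $b=u_{x,\bar\lambda}(z)$ to get strict positivity of the integrand $H_{x_0,\bar\lambda}$, and hence strict pointwise inequality $u_{x_0,\bar\lambda}>u$ on $\{|y-x_0|>\bar\lambda\}$. This much matches the paper. However, the ``propagation'' step — going from the strict inequality at $\lambda=\bar\lambda$ to the inequality for some $\lambda>\bar\lambda$ — is exactly where your plan has two genuine gaps.

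First, the set $\{|y-x_0|\geq\bar\lambda\}$ is \emph{unbounded}, so a strict pointwise inequality there, together with stability of strict inequalities ``on compact subsets,'' does not yield anything for $\lambda$ slightly larger than $\bar\lambda$: the inequality could fail at infinity. The paper handles this by proving the \emph{quantitative} lower bound $(u_{x_0,\bar\lambda}-u)(y)\geq C_0|y-x_0|^p$ for $|y-x_0|\geq\bar\lambda+\ov\delta_0$. The key step is showing via Fatou's lemma that
\[
\liminf_{|y|\to\infty}\frac{(u_{x_0,\bar\lambda}-u)(y)}{|y-x_0|^p}
\;\geq\;\int_{|z-x_0|\geq\bar\lambda}\Big(\big(\tfrac{|z-x_0|}{\bar\lambda}\big)^p-1\Big)H_{x_0,\bar\lambda}(z)\,dz>0,
\]
which controls the unbounded region; the lower bound on the compact annulus $\{\bar\lambda+\ov\delta_0\leq|y-x_0|\leq\bar\lambda+R\}$ is then straightforward. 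Because $u_{x_0,\lambda}(y)=(|y-x_0|/\lambda)^p\,u(y^{x_0,\lambda})$ also grows like $|y-x_0|^p$, the perturbation $u_{x_0,\lambda}-u_{x_0,\bar\lambda}$ can be made small \emph{relative to} $|y-x_0|^p$, and this relative smallness is indispensable.

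Second, your proposal for handling the region near the sphere $\{|y-x_0|=\bar\lambda\}$ relies on the monotonicity of $r\mapsto r^{-p/2}u(x_0+r\theta)$ from Lemma \ref{lem-MS-small}. But that monotonicity holds only in the regime $0<r<\min\bigl(1,\,p/(2\|\nabla\log u\|_\infty)\bigr)$, and for a general $\bar\lambda\in(0,|x_0|)$ there is no reason for $\bar\lambda$ to fall in this range. The paper does \emph{not} re-use that gradient argument; instead it works directly with the integral representation, decomposing $(u_{x_0,\lambda}-u)(y)=I+II$, where $I$ is the integral over $\{\lambda\leq|z-x_0|\leq\bar\lambda+\ov\delta_0\}$ and $II$ is over a fixed annulus $\{\bar\lambda+2\leq|z-x_0|\leq\bar\lambda+3\}$. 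The term $I$ can be slightly negative, but it is controlled by a kernel estimate (the Appendix) of the form $\int K\,dz\leq C(|y-x_0|-\lambda)$, while $II$ is bounded below by $c(|y-x_0|-\lambda)$ via a normal-derivative computation of the kernel on $\partial B_\lambda$; for $\delta_2$ small enough the positive part wins. This quantitative linear degeneration in $|y-x_0|-\lambda$ is the crucial mechanism near the sphere, and it does not follow from a compactness or gradient-bound argument. So the outline is heading in the right direction, but without these two quantitative estimates the contradiction is not actually closed.
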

To prove Proposition \ref{LeM=002}, we establish  integral formulations of $u_{x,\lambda}$ and $u_{x,\lambda}-u$ below.
\begin{lemma}\label{lem-ul}
There holds
\begin{align}\label{eq-u_{x,lambda}}
u_{x,\lambda}(\xi)=  \int_{ \R ^n}\frac{\lambda^{p+2n} |\xi-z|^{p}}{|z-x|^{p+2n}} f\big(|z^{x,\lambda}|,u(z^{x,\lambda}) \big)  dz,
\end{align}
for any  $\xi \in \R^n\setminus\{x\}$.
\end{lemma}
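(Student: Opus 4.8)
The plan is to start from the defining integral equation $u(\xi)=\int_{\R^n}|\xi-z|^p f(|z|,u(z))\,dz$ and perform the change of variables $z\mapsto z^{x,\lambda}$, i.e. the inversion through $\partial B_\lambda(x)$. First I would record that this map is an involution of $\R^n\setminus\{x\}$ with Jacobian $|\det D(z\mapsto z^{x,\lambda})| = \lambda^{2n}/|z-x|^{2n}$, so that $dz$ transforms to $\lambda^{2n}|z-x|^{-2n}\,dz$ after renaming the new variable back to $z$. Applying this to $u(\xi^{x,\lambda})$ gives
\[
u(\xi^{x,\lambda}) = \int_{\R^n} |\xi^{x,\lambda}-z^{x,\lambda}|^p\, f\big(|z^{x,\lambda}|,u(z^{x,\lambda})\big)\,\frac{\lambda^{2n}}{|z-x|^{2n}}\,dz.
\]

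Next I would invoke the identity \eqref{eq-|Kelvin|}, namely $|z-x|\,|\xi-x|\,|\xi^{x,\lambda}-z^{x,\lambda}| = \lambda^2|\xi-z|$, to rewrite the kernel: $|\xi^{x,\lambda}-z^{x,\lambda}|^p = \lambda^{2p}|\xi-z|^p\,|\xi-x|^{-p}\,|z-x|^{-p}$. Substituting this in and collecting powers of $\lambda$ and of $|z-x|$ yields
\[
u(\xi^{x,\lambda}) = \lambda^{2p+2n}\,|\xi-x|^{-p}\int_{\R^n}\frac{|\xi-z|^p}{|z-x|^{p+2n}}\,f\big(|z^{x,\lambda}|,u(z^{x,\lambda})\big)\,dz.
\]
Finally, multiplying both sides by $\big(|\xi-x|/\lambda\big)^p$ — which is exactly the factor turning $u(\xi^{x,\lambda})$ into $u_{x,\lambda}(\xi)$ by definition — cancels the $|\xi-x|^{-p}$ and reduces the power of $\lambda$ from $2p+2n$ to $p+2n$, giving precisely \eqref{eq-u_{x,lambda}}. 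The computation is essentially algebraic bookkeeping once the two geometric facts (the Jacobian of the inversion and the distance identity \eqref{eq-|Kelvin|}) are in hand.

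The only genuine subtlety, and the point I would be careful about, is justifying that the change of variables is legitimate: one needs the integral to be absolutely convergent so that the substitution and the rearrangement of the integrand are valid. This is guaranteed by the hypothesis that $u$ is a (finite) $C^1$ positive solution to \eqref{eq-integral-neg}, so the original integral converges for every $\xi\neq\0$; the inversion is a smooth diffeomorphism of $\R^n\setminus\{x\}$ onto itself and $z=x$ is a null set, so no boundary terms or singular contributions arise. I expect no real obstacle here beyond stating these convergence and measure-theoretic points cleanly; the heart of the lemma is the clean interplay between the Kelvin transform's definition and the identity \eqref{eq-|Kelvin|}.
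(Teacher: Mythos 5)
Your proof is correct and follows the same route as the paper: rewrite $u(\xi^{x,\lambda})$ via the integral equation, substitute $z \mapsto z^{x,\lambda}$ with Jacobian $\lambda^{2n}/|z-x|^{2n}$, and apply the distance identity \eqref{eq-|Kelvin|} to collapse the kernel. The only cosmetic difference is that you multiply by the Kelvin factor $(|\xi-x|/\lambda)^p$ at the end, whereas the paper carries it inside the integral throughout; the algebra is identical.
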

\begin{proof}
Using \eqref{eq-integral-neg} with $y=z^{x,\lambda}$, we get
\begin{align*}
u_{x,\lambda} (\xi) & = \big( \frac{|\xi-x|}{\lambda} \big)^{p} u(\xi^{x,\lambda})\\
&=  \big( \frac{|\xi-x|}{\lambda} \big)^{p} \int_{ \R ^n}  f(|z^{x,\lambda}|, u(z^{x,\lambda})) |\xi^{x,\lambda} -z^{x,\lambda}|^{p}d(z^{x, \lambda}).	
\end{align*}
Computing directly, one has
\begin{equation}\label{eq-dz=dz}
d(z^{x,\lambda}) = \big(\frac{\lambda }{|z-x|} \big)^{2n} dz.
\end{equation}
Therefore, we obtain
\[\begin{split}
u_{x,\lambda} (\xi)	&= \int_{ \R ^n} \big( \frac{|\xi-x|}{\lambda} \big)^{p}  |\xi^{x,\lambda} -z^{x,\lambda}|^{p} f\big(|z^{x,\lambda}|,u(z^{x,\lambda}) \big)  \big(\frac{\lambda }{|z-x|} \big)^{2n} dz.
\end{split}\]
Using the relation \eqref{eq-|Kelvin|},
we have
		\[\begin{split}
			u_{x,\lambda} (\xi)	&=  \int_{ \R ^n} \big( \frac{\lambda |\xi-z|}{|z-x|}\big)^{p}  \big(\frac{\lambda }{|z-x|} \big)^{2n}  f\big(|z^{x,\lambda}|,u(z^{x,\lambda}) \big)  dz\\
			&=  \int_{ \R ^n}\frac{\lambda^{p+2n} |\xi-z|^{p}}{|z-x|^{p+2n}}f\big(|z^{x,\lambda}|,u(z^{x,\lambda}) \big) dz.
		\end{split}
		\]
		Hence, we obtain our desired equality \eqref{eq-u_{x,lambda}}. The proof is completed.
	\end{proof}
	
	Having the formula \eqref{eq-u_{x,lambda}}, we obtain the integral form of  the difference between $u_{x,\lambda}$ and $u$.	
\begin{lemma}\label{lem-U-U}
We have
\begin{equation}\label{eq-u-u=FG}
\begin{split}
&u_{x,\lambda} (\xi)-u(\xi) \\
&=  \int_{|z-x| \geq \lambda}K(x, \lambda; \xi,z) \Big(f(|z|,u(z))-( \frac {\lambda}{|z-x|} )^{p+2n}f\big(|z^{x,\lambda}|,u(z^{x,\lambda}) \big)  \Big)dz.
\end{split}
\end{equation}
with the kernel $K(x, \lambda; \xi,z)$ given by
\begin{equation}\label{eq-K}
\begin{aligned}
K(x, \lambda; \xi,z)&=  \big(\frac {|\xi-x|}{\lambda} \big)^{p} |\xi^{x, \lambda}-z|^{p}- |\xi-z|^{p}\\
& =  |\xi-z^{x,\lambda}|^{p} \big( \frac{|z-x|}{\lambda} \big)^{p}-|\xi-z|^{p}.
\end{aligned}
\end{equation}		
Furthermore, the kernel $K> 0$ for any $|\xi-x|>\lambda$ and $|z-x| > \lambda$. 
\end{lemma}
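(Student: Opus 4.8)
The plan is to obtain \eqref{eq-u-u=FG} by splitting the two integral representations of $u_{x,\lambda}(\xi)$ and $u(\xi)$ along the sphere $\partial B_\lambda(x)$ and folding the interior pieces $\{|z-x|<\lambda\}$ onto the exterior via the inversion $z\mapsto z^{x,\lambda}$, and then to read off the sign of $K$ from a one-line expansion of squares.

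First I would record the two elementary facts about the inversion that do the work. From \eqref{eq-dz=dz}, $z\mapsto z^{x,\lambda}$ maps $B_\lambda(x)\setminus\{x\}$ bijectively onto $\R^n\setminus \overline{B_\lambda(x)}$ with $d(z^{x,\lambda})=\big(\lambda/|z-x|\big)^{2n}\,dz$ and $|z^{x,\lambda}-x|=\lambda^2/|z-x|$; and from \eqref{eq-|Kelvin|} (equivalently, the inversion distance identity $|a^{x,\lambda}-b^{x,\lambda}|=\lambda^2|a-b|/(|a-x|\,|b-x|)$) one gets $|\xi-x|\,|\xi^{x,\lambda}-z|=|z-x|\,|\xi-z^{x,\lambda}|$, which is exactly what makes the two displayed expressions for $K$ in \eqref{eq-K} coincide. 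Now write $u_{x,\lambda}(\xi)=A_1+A_2$, splitting the integral of Lemma \ref{lem-ul} at $|z-x|=\lambda$ with $A_1$ the exterior part, and $u(\xi)=B_1+B_2$, splitting \eqref{eq-integral-neg} at $|z-x|=\lambda$ with $B_1$ the exterior part. In $A_2$ substitute $w=z^{x,\lambda}$, so that the argument $z^{x,\lambda}$ of $f$ becomes the new variable (now ranging over $|w-x|>\lambda$), and do the same in $B_2$; a short bookkeeping of the powers of $\lambda$ and $|z-x|$ turns $A_2$ into $\int_{|z-x|>\lambda}\big(|z-x|/\lambda\big)^p|\xi-z^{x,\lambda}|^p f(|z|,u(z))\,dz$ and $B_2$ into $\int_{|z-x|>\lambda}\big(|\xi-x|/\lambda\big)^p|\xi^{x,\lambda}-z|^p\,\lambda^{p+2n}|z-x|^{-p-2n}f\big(|z^{x,\lambda}|,u(z^{x,\lambda})\big)\,dz$. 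Grouping $u_{x,\lambda}(\xi)-u(\xi)=(A_2-B_1)+(A_1-B_2)$ and using, respectively, the second and the first form of $K$ in \eqref{eq-K}, the first bracket equals $\int_{|z-x|>\lambda}K\,f(|z|,u(z))\,dz$ and the second equals $-\int_{|z-x|>\lambda}\big(\lambda/|z-x|\big)^{p+2n}K\,f\big(|z^{x,\lambda}|,u(z^{x,\lambda})\big)\,dz$; adding them yields \eqref{eq-u-u=FG}. The splitting and the change of variables are legitimate because $u(\xi)$ and $u_{x,\lambda}(\xi)$ are finite and $f\ge 0$, so every piece converges absolutely (the integrability here is exactly \eqref{z} from the proof of Lemma \ref{lem-MS-small}).

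For the positivity of $K$, since $p>0$ it suffices to prove $\big(|\xi-x|/\lambda\big)\,|\xi^{x,\lambda}-z|>|\xi-z|$ whenever $|\xi-x|>\lambda$ and $|z-x|>\lambda$. Using $\xi^{x,\lambda}-x=\lambda^2(\xi-x)/|\xi-x|^2$, write $\tfrac{|\xi-x|}{\lambda}(\xi^{x,\lambda}-z)=\tfrac{\lambda}{|\xi-x|}(\xi-x)-\tfrac{|\xi-x|}{\lambda}(z-x)$; setting $a=\xi-x$ and $b=z-x$ one computes
\[
\Big|\frac{\lambda}{|a|}\,a-\frac{|a|}{\lambda}\,b\Big|^2-|a-b|^2=\frac{1}{\lambda^2}\big(|a|^2-\lambda^2\big)\big(|b|^2-\lambda^2\big),
\]
which is strictly positive under the hypotheses $|a|>\lambda$ and $|b|>\lambda$. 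This gives the desired strict inequality, hence $K>0$. The only genuine obstacle in the lemma is the power-counting in the change of variables of the first part, where it is easy to misplace a factor of $\lambda$ or $|z-x|$ or to invoke the wrong one of the two equivalent forms of $K$ when recombining the brackets; the positivity, by contrast, is a routine expansion and factorization.
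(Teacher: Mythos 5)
Your proof is correct and follows essentially the same route as the paper: split the integral representations of $u$ and $u_{x,\lambda}$ at $\partial B_\lambda(x)$, fold the interior pieces onto the exterior via the inversion $z\mapsto z^{x,\lambda}$ using \eqref{eq-dz=dz} and \eqref{eq-|Kelvin|}, recombine using the equality of the two forms of $K$ (the paper phrases this as $k_1=k_2$), and verify positivity via the same expansion-of-squares identity $\big(\tfrac{|\xi-x|}{\lambda}\big)^2|\xi^{x,\lambda}-z|^2 - |\xi-z|^2 = \tfrac{1}{\lambda^2}(|\xi-x|^2-\lambda^2)(|z-x|^2-\lambda^2)$. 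The only cosmetic difference is that you pair the brackets as $(A_2-B_1)+(A_1-B_2)$ and invoke the two equivalent forms of $K$ directly, while the paper first shows $k_1=k_2$ and then sets $K=-k_1$; these are the same computation.
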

\begin{proof}
Using \eqref{eq-integral-neg} again after decomposing the integral $\int_{ \R ^n}$  into two parts $ \int_{\{z:|z-x| \geq \lambda\}}$ and $ \int_{\{z:|z-x| \leq \lambda\}}$ to get
\begin{align*}
u (\xi)=&  \int_{\{z:|z-x| \geq \lambda\}} |\xi-z|^{p} f (|z|,u(z)) dz \\
&\qquad + \int_{\{z^{x, \lambda}:|z^{x, \lambda}-x| \leq \lambda\}} |\xi-z^{x, \lambda}|^{p} f (|z^{x, \lambda}|, u(z^{x,\lambda})) d(z^{x, \lambda}).
\end{align*}
For the integral $\int_{\{z^{x, \lambda}:|z^{x, \lambda}-x| \leq \lambda\}}$, we  make change of  variable $z^{x, \lambda} \mapsto z$  and use  \eqref{eq-dz=dz} to obtain	
\begin{align*}
u (\xi)=&  \int_{|z-x| \geq \lambda} |\xi-z|^{p}  f (|z|,u(z))  dz \\
&+  \int_{|z-x| \geq \lambda} |\xi-z^{x, \lambda}|^{p}  f (|z^{x, \lambda}|, u(z^{x, \lambda})) \big(\frac{\lambda }{|z-x|} \big)^{2n} dz.
\end{align*}
Then, we get
\begin{equation}\label{Decompose-U}
\begin{split}
u(\xi) =& \int_{|z-x| \geq \lambda} |\xi-z|^{p}  f (|z|,u(z))   dz \\
&+ \int_{|z-x| \geq \lambda } \frac{\lambda^{2n} |\xi-z^{x,\lambda}|^{p}}{|z-x|^{2n}}f\big(|z^{x,\lambda}|,u(z^{x,\lambda}) \big)   dz.
\end{split}
\end{equation}
Note that $(z^ {x,\lambda})^{x, \lambda}= z$. Using  \eqref{eq-u_{x,lambda}} in Lemma \ref{lem-ul}, we clearly have
\begin{align*}		
u_{x,\lambda}(\xi)= &  \int_{|z-x|\geq \lambda} \frac{\lambda^{p+2n} |\xi-z|^{p}}{|z-x|^{p+2n}} f\big(|z^{x,\lambda}|,u(z^{x,\lambda}) \big)   dz\\
&\qquad +  \int_{\{z^{x, \lambda} : |z^{x, \lambda}-x|\leq \lambda\}} \frac{\lambda^{p+2n} |\xi-z^{x,\lambda}|^{p}}{|z^{x,\lambda}-x|^{p+2n}} f\big(|z|,u(z) \big) d(z^{x, \lambda})\\
= &  \int_{|z-x|\geq \lambda} \frac{\lambda^{p+2n} |\xi-z|^{p}}{|z-x|^{p+2n}}f\big(|z^{x,\lambda}|,u(z^{x,\lambda}) \big)    dz\\
&\qquad +  \int_{|z-x|\geq \lambda} \frac{\lambda^{p+2n} |\xi-z^{x,\lambda}|^{p}}{|z^{x,\lambda}-x|^{p+2n}} f\big(|z|,u(z)\big)  d(z^{x,\lambda}).
 \end{align*}
Thanks to \eqref{eq-|Kelvin|} again, we have
 $$|z^{x,\lambda}-x||\xi-x||\xi^{x, \lambda}-(z^{x, \lambda})^{x, \lambda}|=|z^{x,\lambda}-x||\xi-x||\xi^{x, \lambda}-z|=\lambda^2 |\xi-z^{x, \lambda}|$$
 then
  $$ |\xi-z^{x, \lambda}|=\frac{|z^{x,\lambda}-x||\xi-x||\xi^{x, \lambda}-z|}{\lambda^2}.$$
 Therefore, using \eqref{eq-dz=dz} again also, we have 
\begin{align*}\label{Decompose-Ux}		
u_{x,\lambda}(\xi)= &  \int_{|z-x|\geq \lambda} \frac{\lambda^{p+2n} |\xi-z|^{p}}{|z-x|^{p+2n}} \; f\big(|z^{x,\lambda}|,u(z^{x,\lambda}) \big)   dz\\
&\quad+  \int_{ |z-x|\geq \lambda} \frac{\lambda^{p+2n} |z^{x,\lambda}-x|^{p}|\xi-x|^{p}|\xi^{x, \lambda}-z|^{p}}{\lambda^{2p}|z^{x, \lambda}-x|^{p+2n}} f\big(|z|,u(z) \big) \Big(\frac{\lambda}{|z-x|}\Big)^{2n}dz \\
= & \int_{|z-x|\geq \lambda} \frac{\lambda^{p+2n} |\xi-z|^{p}}{|z-x|^{p+2n}}\; f\big(|z^{x,\lambda}|,u(z^{x,\lambda}) \big)    dz\\
&\qquad+  \int_{|z-x|\geq \lambda} \frac{|\xi-x|^{p}|\xi^{x, \lambda}-z|^{p}} {\lambda^{p-4n}|z-x|^{2n}|z^{x, \lambda}-x|^{2n}} \; f\big(|z|,u(z)\big)  dz.\end{align*}
Note that $|z^{x, \lambda}-x||z-x|=\lambda^2$, we obtain
\begin{equation}\label{Uxlambda}
\begin{split}
u_{x,\lambda}(\xi)=	&  \int_{|z-x|\geq \lambda} \frac{\lambda^{p+2n} |\xi-z|^{p}}{|z-x|^{p+2n}}\; f\big(|z^{x,\lambda}|,u(z^{x,\lambda}) \big)    dz\\
&\qquad +  \int_{|z-x|\geq \lambda} \frac{|\xi-x|^{p}|\xi^{x, \lambda}-z|^{p}} {\lambda^{p}} \; f\big(|z|,u(z)\big)  dz.
\end{split}
\end{equation}

In view of \eqref{Decompose-U} and \eqref{Uxlambda}, we obtain 
\begin{align*}
u_{x,\lambda} (\xi)-u(\xi)=  \int_{|z-x| \geq \lambda} \Big(k_2  ( \frac {\lambda}{|z-x|} )^{p+2n} f\big(|z^{x,\lambda}|,u(z^{x,\lambda}) \big)-k_1 f(|z|,u(z)) \Big)dz.
\end{align*}
with $k_1, k_2$ being
\[
k_1= k_1(x, \lambda; \xi,z)= |\xi-z|^{p}-\big( \frac{|\xi-x|}{\lambda} \big)^{p} |\xi^{x, \lambda}-z|^{p}.
\]
and		
\[ 
k_2= k_2(x, \lambda; \xi,z)=|\xi-z|^{p}- |\xi-z^{x,\lambda}|^{p} \big( \frac{|z-x|}{\lambda} \big)^{p}.
\]
It can be seen that 
\begin{align*}
\big( \frac {|\xi-x|}{\lambda} \big)^2 |\xi^{x, \lambda}-z |^2 - |\xi-z |^2 & =\frac{(|z-x|^2-\lambda^2) (|\xi-x|^2-\lambda^2)}{\lambda^2}
\end{align*}
and that
\begin{align*}
\big( \frac {|z-x|}{\lambda} \big)^2 |\xi-z^{x, \lambda} |^2 - |\xi-z |^2 = \frac{(|z-x|^2-\lambda^2) (|\xi-x|^2-\lambda^2)}{\lambda^2}.
\end{align*}
Hence, the negativity of the kernels $k_1<0$ and $k_2 < 0$ for any $|\xi-x|>\lambda$ and $|z-x|>\lambda$ follows. In addition, the above calculations also tell us that $k_1=k_2$. Hence, by putting $k_1=k_2=-K$, we obtain  \eqref{eq-u-u=FG}. The proof is complete.
	\end{proof}

We are in position to prove Proposition \ref{LeM=002} now.	
\begin{proof}[Proof of Proposition \ref{LeM=002}]
By way of contradiction, we suppose  $\ov\lambda(x_0)<|x_0|$ for some points $x_0\in \R^n\setminus\{\0\}$. To simplify, we can  write 
\[
\ov\lambda_0=\ov\lambda(x_0) \quad\text{and}\quad \ov\delta_0 = \min\{1, \frac{|x_0|-\ov \lambda_0}2\}>0. 
\]
From the definition of $\ov\lambda_0$, we know that 
\begin{equation}\label{5CL6-01}
u_{x_0, \ov\lambda_0} (y) \geq u(y) \quad \text{for all $y$ satisfying } |y - x_0|\geq \ov\lambda_0.
\end{equation} 
However, for some $\lambda$ slightly bigger than $\ov\lambda_0$, we will prove that 
\[
u_{x_0,\lambda}(y) \geq u(y) \quad \text{for all $y$ satisfying } |y - x_0|\geq \lambda.
\]
In this situation, we obtain a contradiction to the definition of $\ov\lambda_0$ in \eqref{eq-lambda}.  We thus get $\ov\lambda(x_0)\geq |x_0|$. The proof follows from the estimates $u_{x_0,\lambda}-u$ outside and inside the ball $B(x_0,\ov\lambda_0+\ov\delta_0)$.
		
\noindent\textbf{Estimate of $u_{x_0,\lambda}-u$ outside the ball $B(x_0,\ov\lambda_0+\ov\delta_0)$}. To do that, we first estimate $u_{x_0,\ov\lambda_0}-u$. In the region $|y-x_0|\geq \ov\lambda_0+\ov\delta_0$, we  prove that there exists $C_0\in (0,1)$ such that 
\begin{equation}\label{Lhhdy6-03=001}
(u_{x_0,\ov\lambda_0}-u)(y) \geq C_0|y-x_0|^{p} \quad\text{for all } |y-x_0|\geq \ov\lambda_0+\ov\delta_0.
\end{equation}
Let us use the identity \eqref{eq-u-u=FG} in Lemma \ref{lem-U-U} to obtain
\begin{equation}\label{eq-u-u>=}
\begin{split}
&( u_{x_0,\ov\lambda_0}-u) (y) \\
&=  \int_{|z-x_0| \geq \ov\lambda_0} K(x_0, \ov\lambda_0; y, z) 
\Big(    f(|z|,u(z))-( \frac {\ov\lambda_0}{|z-x_0|} )^{p+2n}f\big(|z^{x_0,\ov\lambda_0}|,u(z^{x_0,\ov\lambda_0}) \big)  \Big)dz.
\end{split}
\end{equation}
Let
\begin{equation}\label{HVeLambda1}
H_{ x_0, \ov\lambda_0}(z) =   f(|z|,u(z))- (\frac {\ov\lambda_0}{|z-x_0|} )^{p+2n} f\big(|z^{x_0,\ov\lambda_0}|,u(z^{x_0,\ov\lambda_0}) \big),
\end{equation}
which is positive thanks to the condition \eqref{Condition-F1}.
Hence, using the Fatou's lemma, we have
\begin{align*}
\liminf_{|y| \nearrow \infty}  \frac{(u_{x_0,\ov\lambda_0}-u) (y)}{|y-x_0|^{p}} &= \liminf_{|y| \nearrow \infty} \int_{|z-x_0| \geq \ov\lambda_0} \frac{K(x_0, \ov\lambda_0; y, z)}{|y-x_0|^{p}} H_{x_0,\ov\lambda_0}(z) dz\\
			&\geq  \int_{|z-x_0| \geq \ov\lambda_0} \Big(  \Big(\frac{|z-x_0|}{\ov\lambda_0} \Big)^{p} -1 \Big) H_{x_0,\ov\lambda_0}(z) dz \\
			&> 0.	
		\end{align*}
		Thus, there is some small $C_1 > 0$ and some large $R \geq \ov\delta_0$ such that
		\[
		( u_{x_0, \ov\lambda_0}-u) (y) \geq C_1|y -x_0|^{p} \quad\text{for all } \; |y -x_0|\geq \ov\lambda_0 + R. 
		\]
		To obtain \eqref{Lhhdy6-03=001}, we are left to show that $ u_{x_0, \ov\lambda_0}-u$ is bounded from below by a positive constant in the ring $\{ y : \ov\lambda_0 +\ov\delta_0 \leq |y-x_0| \leq \ov\lambda_0+ R\}$. Thanks to the positivity of the kernel $K(x_0,\ov\lambda_0; y,z)$ in the region $|y-x_0| > \ov\lambda_0$ and $|z-x_0| > \ov\lambda_0$ (see again Lemma \ref{lem-U-U}), there is some small $C_2>0$ such that
		\[
		K(x_0, \ov\lambda_0; y ,z) \geq C_2
		\]
		for all $y$ and $z$ satisfying
		\[ 
		\ov\lambda_0 +\ov\delta_0 \leq |y-x_0| \leq \ov\lambda_0 + R < 2(\ov\lambda_0 + R) \leq |z-x_0| \leq 4(\ov\lambda_0 + R). 
\]	
		Using this and \eqref{eq-u-u>=1} we can estimate 
		\begin{align*}
			(u_{x_0,\ov\lambda_0}-u) (y) 
			\geq  C_2 \int_{2(\ov\lambda_0 + R) \leq |z-x| \leq 4(\ov\lambda_0 + R)} H_{x_0,\ov \lambda_0}(z) dz
			= C_3
		\end{align*}
		for some $C_3 >0$ which  depends only on $x_0$. Combining the above estimates, we deduce \eqref{Lhhdy6-03=001} for some $C_0$ depending on $C_1$ and $C_3$.
		
		We continue to estimate $ u_{x_0, \lambda}-u$ in the region $|y-x_0| \geq \ov\lambda_0 +\ov\delta_0$ with $\ov\lambda_0 \leq \lambda \leq \ov\lambda_0 +\delta_1$ for some small $\delta_1 \in (0,\ov\delta_0)$. Indeed, since
		\[
		u_{x_0,\lambda} (y) = \big( \frac{|y-x_0|}{\lambda} \big)^{p} u(y^{x_0,\lambda}),
		\]
		we use the continuity to get 
		\begin{equation}\label{Ine3.12}
			( u_{x_0, \ov\lambda_0} - u_{x_0, \lambda}) (y) \geq -\frac{C_0}{2} |y -x_0|^{p}
		\end{equation}
		for some small $\delta_1 \in (0, \ov\delta_0)$ and for all $\ov\lambda_0 \leq \lambda \leq \ov\lambda_0 +\delta_1$ and all $|y-x_0| \geq \ov \lambda_0 + \ov \delta_0$. Here the constant $C_0$ is as in \eqref{Lhhdy6-03=001}. The two inequalities \eqref{Lhhdy6-03=001} and \eqref{Ine3.12} implies that 
\begin{equation}\label{eq-outside1}
( u_{x_0, \lambda}-u) (y) =( u_{x_0, \ov\lambda_0}-u) (y)+ ( u_{x_0, \lambda}-u_{x_0,\ov\lambda_0} ) (y) \geq \frac{C_0}2 |y -x_0|^{p}
\end{equation}
for all $y$ satisfying $|y -x_0|\geq \ov\lambda_0 + \ov\delta_0 $ and all $\lambda$ satisfying $\ov\lambda_0 \leq \lambda \leq \ov\lambda_0 +\delta_1$.
		
\noindent\textbf{Estimate of $ u_{x_0, \lambda}-u$ inside the ball $B(x_0, \ov\lambda_0 +\ov\delta_0)$}. With the constant $\delta_1$ being found in the previous step, we continue our analysis  assuming  $\ov\lambda_0 \leq \lambda \leq \ov\lambda_0 +\delta_1$ to  obtain \eqref{eq-inside1} below.
		
Let $H_{x_0,\lambda}$ be  as same as $H_{x_0,\ov\lambda_0}$ (see \eqref{HVeLambda1}) after replacing $\ov\lambda_0$ by $\lambda$. We have that
\begin{equation}\label{eq-u-u>=1}
\begin{split}
(u_{x_0,\lambda}-u) (y) & \geq  \int_{|z-x_0| \geq \lambda} K(x_0, \lambda; y, z) H_{x_0,\lambda }(z)dz\\
&= \Big( \int_{ \ov\lambda_0 +\ov\delta_0 \geq |z-x_0| \geq \lambda} + \int_{ |z-x_0| \geq \ov\lambda_0 +\ov\delta_0 } \Big) K(x_0, \lambda; y, z) H_{x_0,\lambda}(z) dz.
\end{split}
\end{equation}
Due to \eqref{eq-outside1}, we have that $ u_{x_0,\lambda}\geq u$ in the region $|z-x_0| \geq \ov\lambda_0 +\ov\delta_0 $ and $\ov \lambda_0 \leq \lambda \leq \ov \lambda_0 + \delta_1$, yielding
\[
\int_{ |z-x_0| \geq \ov\lambda_0 +\ov\delta_0} K(x_0, \lambda; y, z) H_{x_0, \lambda}(z) dz \geq 
\int_{\ov\lambda_0 + 3 \geq |z-x_0| \geq \ov\lambda_0 + 2} K(x_0, \lambda; y, z) H_{x_0, \lambda}(z) dz.
\]
As a result,  we obtain that
\begin{align*}
(u_{x_0,\lambda}-u) (y) &\geq  \Big( \int_{ \ov\lambda_0 +\ov\delta \geq |z-x_0| \geq \lambda} + \int_{\ov\lambda_0 + 3 \geq |z-x_0| \geq \ov\lambda + 2} \Big) K(x_0, \lambda; y, z) H_{x_0, \lambda}(z) dz \\
& =  ( I + II ).
\end{align*}
Hence, let $\delta_2 \in (0, \delta_1)$ be sufficiently small, we will prove that (see \eqref{eq-inside1})
\[
I + II \geq 0 \; \; \; \;  \text{for all } \lambda \leq |y-x_{0}| \leq \ov\lambda_0 + \ov\delta_0, \; \ov\lambda_0 \leq \lambda \leq \ov\lambda_0 + \delta_2.
\]
		
\noindent\textit{Estimate of $I$}. 
Thanks to the smoothness of $f$ and $u$, there exists  $C_5>0$ independent of $\delta_2 $ such that
\[ 
\begin{split}
&\Big| f\Big(|z^{x_0, \lambda}|,  \big( \frac{\lambda}{|z-x_0|}\big)^{p} u_{x_0,\lambda}(z) \Big) - f\Big(|z^{x_0, \lambda}|,  \big( \frac{\lambda}{|z-x_0|}\big)^{p}u_{x_0,\ov\lambda_0}(z) \Big)\Big| \\
&\leq C_4\big(\frac{\lambda}{|z-x_0|}\big)^{p}|u_{x_0, \lambda }(z)- u_{x_0,\ov\lambda_0}(z)|\\
&\leq C_5 (\lambda - \ov\lambda_0 ) \leq C_5\delta_2,
\end{split}
\]
for all $\lambda \leq |z-x_0| \leq \ov\lambda_0 + \ov\delta_0$ and all $\ov\lambda_0 \leq \lambda \leq \ov\lambda_0 +\delta_2 $. Note that $u(z)\leq u_{x_0,\ov\lambda_0}(z)$ (see \eqref{5CL6-01}). Hence, combining the resulting inequality with the condition \eqref{Condition-F1} gives us that
\begin{equation}\label{eq-estimate-U-U1}
\begin{split}
H_{x_0,\lambda}(z) &=f(|z|,u(z)) - ( \frac {\lambda}{|z-x_0|} )^{p+2n}f\Big(|z^{x_0, \lambda}|,  \big( \frac{\lambda}{|z-x_0|}\big)^{p} u_{x_0,\lambda}(z) \Big)\\
&>  ( \frac {\lambda}{|z-x_0|} )^{p+2n}f\Big(|z^{x_0, \lambda}|,  \big( \frac{\lambda}{|z-x_0|}\big)^{p} u(z) \Big)\\&\qquad\qquad- ( \frac {\lambda}{|z-x_0|} )^{p+2n}f\Big(|z^{x_0, \lambda}|,  \big( \frac{\lambda}{|z-x_0|}\big)^{p} u_{x_0,\lambda}(z) \Big) \\
&>  ( \frac {\lambda}{|z-x_0|} )^{p+2n}f\Big(|z^{x_0, \lambda}|,  \big( \frac{\lambda}{|z-x_0|}\big)^{p} u_{x_0,\ov\lambda_0}(z) \Big)\\
&\qquad\qquad- ( \frac {\lambda}{|z-x_0|} )^{p+2n}f\Big(|z^{x_0, \lambda}|,  \big( \frac{\lambda}{|z-x_0|}\big)^{p} u_{x_0,\lambda}(z) \Big) \\
&\geq -C_6 \delta_2.
\end{split}
\end{equation}
Meanwhile, we utilize \eqref{eq-K} to get 
\begin{align}\label{eq-estimate-K11}
\int_{\lambda \leq |z-x_0| \leq \ov\lambda_0 +\ov\delta_0 } & K(x_0,\lambda; y, z) dz 
\leq C_7 (|y-x_0| - \lambda) 
\end{align}
for some $C_7>0$ (see Appendix \ref{apd-estimate-kernel}). 
In view of the two estimates \eqref{eq-estimate-U-U1} and \eqref{eq-estimate-K11}, we deduce
\begin{equation}\label{Est-I1}
			I = \int_{ \ov\lambda_0 +\ov\delta_0 \geq |z-x_0| \geq \lambda} K(x_0, \lambda; y, z) H_{x_0, \lambda}(z) dz 
			\geq - C_6C_7 \delta_2 (|y-x_0| - \lambda) .
		\end{equation}

\noindent\textit{Estimate of $II$}. Second, we estimate $II$ as follows.  
For any $\ov\lambda_0 \leq \lambda \leq \ov\lambda_0 +\delta_2$ and for any $z$ satisfying $\ov\lambda_0 +2 \leq |z-x_0| \leq \ov\lambda_0 +3$, we obtain from the condition \eqref{Condition-F1} again that $H_{x_0,\lambda}(z) >0$. By the continuity of $f$ and $u$, that implies
\[
\begin{split}
\min_{\ov\lambda_0 +2 \leq |z-x_0| \leq \ov\lambda_0 +3}H_{x_0,\lambda}(z) = C_8>0.
\end{split}
\]
Hence, we obtain 
\begin{equation}\label{IIgeq1}
\begin{split}
II &= \int_{\ov\lambda_0 + 3 \geq |z-x_0| \geq \ov\lambda_0 + 2} K(x_0, \lambda; y, z) H_{x_0, \lambda}(z) dz \\
& \geq C_8 \int_{\ov\lambda_0 + 3 \geq |z-x_0| \geq \ov\lambda_0 + 2} K(x_0, \lambda; y, z) dz .
\end{split}
\end{equation}
Note that $K(0, \lambda; \xi,z) =0$ for $\xi \in \partial B_\lambda(\0)$ from and that
\[\begin{split}
\langle \nabla_\xi K(0, \lambda; \xi, z), \xi \rangle \big|_{|\xi|=\lambda}&=p |\xi-z|^{p-2} ( |z|^2- |\xi|^2 ) \\
&> p |\xi-z|^{p-2}( (\overline\lambda_0 + 2)^2 - (\overline\lambda_0 + 1)^2) >0,
\end{split}\]
for all $\ov\lambda_0 + 2 \leq |z| \leq \ov\lambda_0 +3$. Therefore, there is some $C_9>0$ independent of $\delta_2$ such that
\begin{equation}\label{Est-K0}
K(0, \ov\lambda_0; \xi ,z) \geq C_9 (|\xi| - \lambda)
\end{equation}
for all $\xi$ near $\partial B_\lambda(\0)$ and all $z$ satisfying $\overline\lambda_0 + 2 \leq |z| \leq \overline\lambda_0 +3$. However, thanks to the positivity and the smoothness of $K$, we can choose $C_9$ even smaller, if necessary, in such a way that the preceding estimate also holds for all $\xi \in B_{\overline \lambda_0 + \overline \delta_0}(x) \setminus B_\lambda(x)$. Now letting $\xi = y-x_0$, we arrive at
\begin{equation}\label{Est-Kx_01}
K(x_0, \lambda; y ,z) = K(0, \lambda; y -x_0,z) \geq C_9 (|y-x_0| - \lambda)
\end{equation}
for all $\overline\lambda_0 + 2 \leq |z-x_0| \leq \overline\lambda_0 +3$ and all $ \lambda \leq |y-x_0| \leq \overline\lambda_0 + \overline\delta$. Putting \eqref{Est-Kx_01} into \eqref{IIgeq1}, we arrive at
\begin{equation}\label{Est-II1}
II \geq C_8 C_9 \Big(\int_{\ov\lambda_0 + 3 \geq |z-x_0| \geq \ov\lambda_0 + 2} dz \Big) (|y-x_0| - \lambda)=:C_{10} (|y-x_0| - \lambda).
\end{equation}
We are now in position to combine  \eqref{Est-I1} and \eqref{Est-II1}. By \eqref{Est-I1}, if we choose $\delta_2$ sufficiently small, then we have that 
\begin{equation}\label{eq-inside1}
\begin{split}
( u_{x_0,\lambda}-u) (y) &=I+II \geq ( -C_6 C_7 \delta_2 + C_{10}) ( |y -x_0| -\lambda)> 0,
\end{split}
\end{equation}
for $\ov\lambda_0 \leq \lambda \leq \ov\lambda_0 + \delta_2$ and for $\lambda \leq |y -x_0| \leq \ov\lambda_0 + \ov\delta_0 $.

With the help of  \eqref{eq-outside1} and \eqref{eq-inside1}, we deduce that $( u_{x_0,\lambda}-u) (y) >0$. This violates the definition of $\ov\lambda_0 $ in \eqref{eq-lambda}. Hence, Proposition \ref{LeM=002} is proven.
\end{proof}
	
\subsection{Proof of Theorem \ref{thm-great}}
Let us complete the proof of Theorem \ref{thm-great}.
\begin{proof}
By Lemma \ref{LeM=002}, we have for every $x\in \R^n \setminus \{\0\}$, 
\begin{equation}\label{SS555-1}
u(y) \leq u_{x,\lambda}(y) \quad \text{for all } \; |y - x| \geq \lambda, \; 0< \lambda< |x|. 
\end{equation}
Let $y\in \R^n \setminus \{\0\}$	and $a >0$ be arbitrary but fixed. So, for $ e$ be any unit vector in $\R^n$ (e.g. $ e = -y/|y|$), which satisfies
\begin{equation}\label{eq-vector1}
\langle y - a  e,  e\rangle \leq 0.
\end{equation}
 For any $R >a$, we set $\lambda = R - a>0$ and $x=R  e\in \R^n$. For $0<\lambda< |x|$, then 
\[
|y-x|^2 = |y - a  e -\lambda e|^2 =\lambda^2 + |y- a  e |^2 -2\lambda\langle y - a  e ,  e\rangle \geq \lambda^2,
\]
due to \eqref{eq-vector1}. Thus, using \eqref{SS555-1} to get
\begin{align*}
u(y)\leq u_{x,\lambda}(y) & =\big( \frac{|y - x|}{\lambda}\big)^{p} u\big( x + \frac{\lambda^2(y -x)}{|y -x|^2} \big) \\
& = \big(\frac{|y-R  e|}{R-a}\big)^{p} u\big(R  e+\frac{(R-a)^2(y-R  e)}{|y-R  e|^2}\big).
\end{align*}
Note that	
\[
\begin{split}
R  e+\frac{(R-a)^2(y-R  e)}{|y-R  e|^2} 
&= \frac{R(|y|^2-2R\langle y,  e\rangle+R^2)  e+ (R^2-2Ra+a^2)(y-R  e)}{|y-R  e|^2}.
\end{split}
\]
Therefore
\[
R  e+\frac{(R-a)^2(y-R  e)}{|y-R  e|^2} \to y-2(\langle y,  e \rangle-a)  e \quad\text{as } R\to+\infty.
\]
Moreover,
\[	 
\big(\frac{|y-R  e|}{R-a}\big)^{p} \to 1 \quad\text{as } R\to+\infty.
\]
Following the continuity of $u$, we get
\begin{equation}\label{SS666-1}
u(y) \leq u(y - 2( \langle y,  e \rangle - a )  e ).
\end{equation}
In the inequality \eqref{SS666-1}, we take the limit $a \searrow 0$ to obtain
\[
u(y)\leq u(-y) \quad \text{for all } \; y \in \R^n \setminus \{\0\}.
\]
This inequality implies that $u$ is radially symmetric about the origin. 
		
To complete the proof, we will show that $u$ is monotone increasing with respect to the origin. Let $y=(y_1,y_2,...,y_n)$, due to the radial symmetry of $u(y)$, it suffices to show the monotonicity in the $y_1$-direction.  For arbitrary but fixed $a > 0$, let us choose $ e = (1, 0,..., 0) \in \R^n$ and consider $y = (y_1, 0,\dots,0) \in (0, +\infty) \times \R^{n-1}$ in such a way that it satisfies \eqref{eq-vector1},  namely $0 < y_1 < a$. Then, by \eqref{SS666-1} and by 
\[
y - 2( \langle y,  e \rangle - a ) e = (2a-y_1, 0,\dots,0).
\]
we obtain
\[
u(y_1,0, ..., 0) \leq u(2a-y_1, 0,...,0).
\]
Hence, the function $t \mapsto u(t, 0, ..., 0)$ is increasing on $(0, +\infty)$. This implies that the symmetric function $u$ is monotone increasing with respect to the origin. The proof of Theorem \ref{thm-great} is complete.
\end{proof}

\section*{Acknowledgements}		
	
Both authors are deeply grateful to Assoc. Prof. Qu\cfac oc Anh Ng\^o for his fruitful discussions, improving the presentation of this paper. This paper was complete when the authors were working at the Vietnam Institute for Advanced Study in Mathematics (VIASM). We wish to thank VIASM for the financial support and the kind hospitality.

\appendix

\section{Estimate (\ref{eq-estimate-K11}) for the kernel $K$}\label{apd-estimate-kernel}
	
This appendix is to prove \eqref{eq-estimate-K11} , that is
\begin{align*}
\int_{\lambda \leq |z-x| \leq \overline\lambda +\overline\delta } K(x,\lambda; y, z) dz \leq C \big( |y-x| - \lambda \big) 
\end{align*}
for some $C>0$. The proof is standard, and we show that to make our paper fully comprehensive. 
	
Let us recall the elementary inequality
\begin{equation}\label{element}
| a^s -b^s | \leq s |a-b| \max\Big\{ a^{s-1},  b^{s-1} \Big\}
\end{equation}
for any $a,b>0$ and any $s>0$. Next we prove that with $\kappa > n$, there holds
\begin{equation}\label{eq-eq}
\int_{\lambda \leq |z-x| \leq \overline\lambda +\overline\delta } |y -z|^\kappa dz \leq C(\overline\lambda, \overline\delta,|x|,R)
\end{equation}
if $y \in \overline B(x,R)$. Indeed, \eqref{eq-eq} follows from the following estimate
\[
\begin{split}
\int_{\lambda \leq |z-x| \leq \overline\lambda +\overline\delta } |y -z|^\kappa dz 
& =\int_{\{y+z : \lambda \leq |z-x| \leq \overline\lambda +\overline\delta \}}|z|^\kappa dz
\\
&\leq \int_{B( \overline\lambda +\overline\delta +|x| +R)} |z|^\kappa dz
=: C
\end{split}
\]
by triangle inequality. Thanks to \eqref{eq-K}, we obtain
\begin{align*}
&\int_{\lambda+\delta_2 \leq |z-x| \leq \overline\lambda +\overline\delta }  K(x,\lambda; y, z) dz\\
& \leq  \int_{\lambda \leq |z -x| \leq \overline\lambda +\overline\delta } \Big| |y -z|^p - |y^{x,\lambda} -z|^p\Big| dz \\ 
&\qquad\qquad+\int_{\lambda \leq |z-x| \leq \overline\lambda +\overline\delta } \Big| \Big(\frac{|y-x|}{\lambda}\Big)^p -1 \Big| |y^{x,\lambda} -z|^p dz\\
&= I_1 + I_2.
\end{align*}
We estimate $I_1$ and $I_2$ term by term. For the integral term $I_1$, using \eqref{element}, we have
\[
\begin{split}
&\Big| |y -z|^p - |y^{x,\lambda} -z|^p\Big| \leq p |y-y^{x,\lambda}|  \max\Big\{ |y -z|^{p-1}, |y^{x,\lambda} -z|^{p-1}\Big\}.
\end{split}
\]
Note that for $y$ satisfying $\lambda \leq |y-x| \leq \overline\lambda +\overline \delta$,
\[
|y^{x,\lambda} - x| = \frac{\lambda^2}{|y-x|} \leq \lambda \leq \overline\lambda +\overline \delta.
\]
 Hence,  making use of \eqref{eq-eq} twice, we get for some $C>0$ that
\begin{equation}\label{EstI1}
I_1 \leq C |y-y^{x,\lambda}|.
\end{equation}
For the integral term $I_2$, as $|y-x| \geq \lambda$, we have
\begin{align*}
\Big| \Big(\frac{|y-x|}{\lambda} \Big)^p -1 \Big|
& =\lambda^{-p} \Big| |y-x|^p - \lambda^p \Big|\\
& \leq p \lambda^{-p} \big| |y-x| - \lambda \big| \max \Big\{|y-x|^p , \lambda^p \Big\}\\
& \leq p \big( |y-x| - \lambda \big).
\end{align*}
 Thus, we continue applying \eqref{eq-eq} to obtain for some $C>0$
\begin{equation}\label{EstI2}
I_2 \leq C \big( |y-x| - \lambda \big).
\end{equation}
Combining \eqref{EstI1} and \eqref{EstI2} gives us that
\begin{align*}
\int_{\lambda \leq |z-x| \leq \overline\lambda +\overline\delta } K(x,\lambda; y, z) dz & \leq C |y-y^{x,\lambda}| + C \big( |y-x| - \lambda \big) \\
&=C \Big| 1- \frac{\lambda^2}{|y-x|^2} \Big| \big|y-x\big| + C \big( |y-x| - \lambda \big) \\
& \leq C \big( |y-x| - \lambda \big) 
\end{align*}
for some $C>0$, which is our desired inequality.

\end{document}